\newtheorem{theorem}{Theorem}[section]
\newtheorem{lemma}[theorem]{Lemma}
\newtheorem{coro}[theorem]{Corollary}
\newtheorem{prop}[theorem]{Proposition}
\numberwithin{theorem}{section}
\numberwithin{equation}{section}
\def\endcproof{%
  \renewcommand\qedsymbol{\openbox\rlap{\textsubscript{\std@currentclaim}}}%
  \endproof
}
\xpretocmd{\claim}{\std@patch@thm}{}{}
\newcommand{\std@patch@thm}{%
  \makeatletter
  \regexpatchcmd{\@thm}
    {(\c{refstepcounter}\cB.\cP.2\cE.)}
    {\1\c{std@savelabel}}
    {}{}%
  \makeatother
}
\newcommand{\std@savelabel}{\xdef\std@currentclaim{\@currentlabel}}
\newtheorem{conjecture}[theorem]{Conjecture}
\title{On Graph Odd Edge-Colorings and Odd Edge-Coverings}
\author{Xiao-Chuan Liu}
\address[Liu]{Departamento de Matemática,
 Universidade Federal de Pernambuco,
	Avenida Jornalista Aníbal Fernandes - Cidade Universitária, Recife, Brasil}
\email{xiaochuan.liu@ufpe.br}
\author{Mirko Petru\v{s}evski}
\address[Petru\v{s}evski]{Department of Mathematics and Informatics, Faculty of Mechanical Engineering - Skopje, Ss. Cyril and Methodius University, Macedonia} 
\email{mirko.petrushevski@gmail.com}
\author{Xu Yang}
\address[Yang]{Instituto de Computação, Universidade Federal de Alagoas,
	Av. Lourival Melo Mota, S/N, Maceió, Brasil} 
\email{yang@ic.ufal.br}
\begin{document}
\maketitle{}
\begin{abstract}
An odd $k$-edge-coloring of a graph $G$ is a (not necessarily proper) edge-coloring with at most $k$ colors such that each non-empty color class induces a graph in which every vertex is of odd degree; similarly, if more than one color per edge is allowed, we speak of an odd $k$-edge-covering of $G$.
In this paper, we fully resolve two major 
conjectures on odd edge-colorings and odd edge-coverings of graphs,
proposed by Petru{\v{s}}evski and {\v{S}}krekovski ({\it European Journal of Combinatorics,} 91:103225, 2021). 
The first conjecture states that, apart from two particular exceptions which are respectively odd $5$- and odd-$6$-edge-colorable, for any other loopless and connected 
graph $G$ there exists an edge $e$ such that 
$G\backslash \{e\}$ is odd $3$-edge-colorable. 
The second conjecture 
states that any simple graph $G$  admits an odd $3$-edge-covering in which  
at most one edge receives more than one color. In addition, we strongly confirm the second conjecture by demonstrating that there exists an odd $3$-edge-covering in which at most one edge receives two colors and the rest of the edges receive unique colors.
\end{abstract}

\section{Introduction}
In this paper, we consider only finite undirected loopless graphs, which may contain multiple edges unless specified otherwise as simple graphs. Throughout this paper, we employ the graph notation from~\cite{murty2008graph}.

For a graph $G$, an {\it edge-coloring} of $G$ is an assignment of colors from the set $[k] = \{1,2,\ldots,k\}$ to the edges of $G$, 
 which provides an edge decomposition of $G$ into $k$ color classes.
Similarly, an {\it edge-covering} of $G$ is an assignment of colors from $[k]$ to the edges of $G$, allowing for multiple colors per edge. 
Equivalently, we can denote an edge-coloring by a mapping $\varphi: E(G) \to [k]$. In the case of an edge-covering, we denote it by a set-valued mapping $\psi: E(G)\to 2^{[k]}\backslash \emptyset$.
If $v\in V(G)$, $E_G(v)$ is the set of edges incident with $v$. Each $v$ with an even (resp. odd) degree $d_{G}(v)$ is an \textit{even} (resp. \textit{odd}) \textit{vertex} of $G$. A graph is \textit{even} (resp. \textit{odd})
if all its vertices are even (resp. odd). 
We say $G$ is {\it odd $k$-edge-colorable} 
(respectively, {\it odd $k$-edge-coverable}) if there is an edge-coloring (respectively, an edge-covering)
of $G$ by at most $k$ odd subgraphs, which we call an odd $k$-edge-coloring (respectively, an odd $k$-edge-covering).
In other words, the coloring map or the covering map 
$\varphi$ is required to satisfy that
any non-empty edge set 
$E_j := \{ e \in E(G) : e\in \varphi^{-1}(j) \}$, $j=1,\ldots,k$, forms an odd graph. We also define $\chi_{\text{odd}}'(G)$ (respectively, $\text{cov}_{\text{odd}}(G)$) to be the least integer $k_0$ such that $G$ is odd
$k_0$-edge-colorable (respectively, odd $k_0$-edge-coverable). 
Equivalently, $\chi_{\text{odd}}'(G)$ (respectively, $\text{cov}_{\text{odd}}(G)$) is the least integer
$k_0$ such that the edge set of $G$ admits a decomposition (respectively, cover) comprised of $k_0$ odd subgraphs. 
An obvious necessary and sufficient condition for odd edge-colorability or odd edge-coverability of $G$ is the absence of vertices incident only to loops. Apart from this, the presence of loops
does not influence the existence nor changes the values of the parameters $\chi_{\text{odd}}'(G)$ and $\text{cov}_{\text{odd}}(G)$. Therefore, while studying odd edge-colorability and odd edge-coverability, we may without loss of generality restrict ourselves to loopless connected  graphs.
 
The topics of determining $\chi_{\text{odd}}'(G)$ and $\text{cov}_{\text{odd}}(G)$ for a graph $G$ 
have been very active in recent years, and we refer to the recent survey paper~\cite{petruvsevski2021odd} for the history, references, and discussions on many related problems, ranging from theoretical problems on graphs and multigraphs to problems involving algorithms.

As an initiating result in this area, Pyber~\cite{pyber1991covering} proved the following theorem. 
\begin{theorem}\label{Pyber_4} (\cite{pyber1991covering}) For any
simple graph $G$, $\chi_{\text{odd}}'(G) \leq 4$.
If $G$ is a connected graph (not necessarily a simple graph) and it has an even order, 
then $\chi_{\text{odd}}'(G)\leq 3$. 
\end{theorem}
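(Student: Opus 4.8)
The plan is to prove the even-order assertion first and then bootstrap the bound $4$ for arbitrary connected simple graphs from it. Two ingredients do the work: a parity-reduction step, and the observation that forests split into two odd subgraphs.

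For the even-order case, let $G$ be connected with $|V(G)|$ even and let $W$ be its set of even-degree vertices. Since $|V(G)|$ and the number of odd-degree vertices are both even, $|W|$ is even. I would fix a spanning tree $\mathcal{T}$ of $G$ and let $F\subseteq E(\mathcal{T})$ be the unique set whose set of odd-degree vertices is exactly $W$ (put $e\in F$ iff one side of $\mathcal{T}-e$ contains an odd number of vertices of $W$). Then $F$ is a forest, and a one-line parity count shows that every vertex of $G_3:=G\setminus F$ has odd degree: an even-degree vertex of $G$ loses an odd number of its edges to $F$, and an odd-degree vertex loses an even number. Hence $G_3$ is a spanning odd subgraph. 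It then remains only to split the forest $F$ into two odd subgraphs $G_1,G_2$, giving the partition $G=G_1\sqcup G_2\sqcup G_3$ and $\chi_{\text{odd}}'(G)\le 3$. This argument never uses simplicity, which is why the bound $3$ persists for multigraphs.

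To split a forest into two odd subgraphs I would induct on the number of edges; it suffices to treat a tree $T$. Stars are immediate, so assume $T$ has a vertex $v$ that, besides its leaf-neighbours $u_1,\dots,u_t$ with $t\ge 1$, has exactly one further neighbour $w$ (take $v$ to be the second vertex of a longest path). Apply the inductive hypothesis to $T':=T-\{u_1,\dots,u_t\}$, in which $v$ is a leaf, to obtain odd colour classes $A',B'$ with $vw\in A'$; then colour all of $vu_1,\dots,vu_t$ with colour $B'$ when $t$ is odd and with colour $A'$ when $t$ is even. A short check confirms both classes stay odd.

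For the general bound, let $G$ be connected simple of odd order (the even-order and one-vertex cases being settled). I would delete a non-cut vertex $v$, so that $G-v$ is connected of even order and, by the above, has an odd $3$-edge-colouring with classes $A,B,C$; then I reattach $v$ with the help of a fourth colour $D$. If $\deg_G(v)$ is odd, give colour $D$ to every edge at $v$: then $v$ has odd degree in $D$, each neighbour gains degree $1$ there, and $A,B,C$ are untouched. If $\deg_G(v)$ is even, give colour $D$ to all but one edge at $v$ (leaving $v$ with odd degree in $D$) and route the last edge $vy_0$ into whichever of $A,B,C$ the vertex $y_0$ does not belong to. The main obstacle is exactly guaranteeing that some neighbour $y_0$ of $v$ is missing from one of $A,B,C$: building the colouring of $G-v$ by the recipe above, a neighbour $y$ of $v$ lies in all three classes precisely when $\deg_G(y)$ is even and $y$ has degree $\ge 2$ in the spanning tree of $G-v$ that was used, so it is enough to pick $v$ having a neighbour of odd degree, or to choose that spanning tree so that some neighbour of $v$ is a leaf (possible as soon as $v$ has a neighbour that is a non-cut vertex of $G-v$). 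The leftover configurations — every neighbour of $v$ even and a cut vertex of $G-v$, such as when $v$ is universal — have to be handled by a separate ad hoc argument, and showing that these genuinely exceptional graphs are still $4$-colourable is the technical heart of the odd-order case.
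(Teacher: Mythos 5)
Your even-order argument is correct and follows essentially the paper's route: the set $F$ you extract from a spanning tree is precisely a $T$-join (for $T=W$) that is a forest, which is the content of Lemma~\ref{forest_coforest}, and your tree-splitting induction reproves Lemma~\ref{forest}; the paper's remark after Lemma~\ref{forest_coforest} obtains the bound $3$ for even order by exactly this combination, and, as you note, simplicity plays no role there.

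The first assertion, $\chi_{\text{odd}}'(G)\leq 4$ for every connected simple graph, is however not established. Your reattachment scheme works when the deleted non-cut vertex $v$ has odd degree, and when $\deg_G(v)$ is even provided some neighbour $y_0$ misses one of the three classes of the colouring of $G-v$; you correctly note this can be forced if $v$ has a neighbour of odd degree, or if some neighbour of $v$ is a non-cut vertex of $G-v$ (then a spanning tree can be chosen with that neighbour as a leaf, so its degree in the join $F$ is at most $1$ --- note the relevant quantity is the degree in $F$, not in the spanning tree, and that every vertex automatically meets the third class, the odd remainder). But the residual configuration --- every non-cut vertex $v$ of $G$ has even degree, all of its neighbours have even degree, and every neighbour of $v$ is a cut vertex of $G-v$ --- is exactly what you defer to ``a separate ad hoc argument'' and never supply. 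That case is not vacuous by inspection: excluding it, or colouring it, requires a genuine structural (block/end-block-type) analysis of the kind Pyber's proof, and indeed the later sections of this paper, actually carry out; it is where the substance of the $4$-bound lies. (Incidentally, your proposed example of a leftover case is misdiagnosed: if $v$ is universal and non-cut, then $G-v$ is connected with at least two non-cut vertices, all adjacent to $v$, so your own spanning-tree fallback applies.) As written, the proposal proves the bound $3$ for even order but only a partial case of the bound $4$, which is the main content of the cited theorem; the paper itself quotes Pyber for that half and only sketches the even-order half via Lemma~\ref{forest_coforest}.
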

It was also observed in~\cite{pyber1991covering} that the upper bound of $4$ in the previous result can be achieved by 
the wheel graph $W_4$. Lu\v{z}ar, Petru\v{s}evski, and {\v{S}}krekovski~\cite{luvzar2014odd} showed that connected loopless (multi)graphs are odd 5-edge-colorable, with one particular exception that is odd 6-edge-colorable. Before proceeding, we describe the exception(s). Given a loopless graph $G$, a \textit{bouquet} $\mathcal{B}$ is the set of parallel edges on a pair of adjacent vertices. In particular, for adjacent vertices $v,w\in V(G)$, the \textit{$vw$-bouquet} $\mathcal{B}_{vw}$ is the intersection  $E_G(v)\cap E_G(w)$. A \textit{Shannon triangle} is a loopless graph on three pairwise adjacent vertices.  If $p,q,r$ are parities of the sizes of its bouquets in non-increasing order, with $2$ (resp. $1$) denoting an even-sized (resp. odd-sized) bouquet, then $S$ is a Shannon triangle of \textit{type $(p,q,r)$}, denoted $S_{p,q,r}$. Examples can be found in Figure~\ref{fig:fourshannons}.

\begin{figure}[ht!]
	$$
		\includegraphics{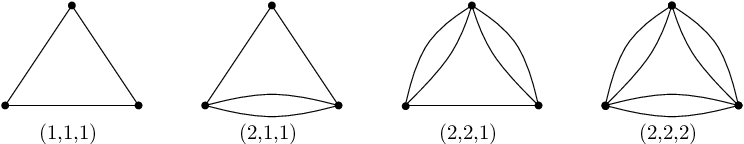}
	$$
	\caption{Examples of Shannon triangles (the smallest one of each type).}
	\label{fig:fourshannons}
\end{figure}

Noting that the edge set of every odd subgraph of a Shannon triangle is fully contained in a single bouquet, it can be verified that 
$\chi_{\text{odd}}'(S_{p,q,r})=p+q+r=\text{cov}_{\text{odd}}(S_{p,q,r})$. In particular, $\chi_{\text{odd}}'(S_{2,2,2})=6$ and $\chi_{\text{odd}}'(S_{2,2,1})=5$. In 2017, Petru\v{s}evski~\cite{petruvsevski2018odd} proved that the above two families of graphs are the only exceptions - 
all other connected loopless graphs are odd $4$-edge-colorable. Furthermore, Petru\v{s}evski~\cite{petruvsevski2018odd} also proved that when $\chi_{\text{odd}}'(G)=4$, a color class can be reduced to a size of at most 2. In the end of the paper, the author raised a natural question regarding the existence of an edge-analogue of the second part of Theorem ~\ref{Pyber_4}. That is, for a connected graph that 
is odd $4$-edge-chromatic, is it always possible to delete a particular edge (instead of a vertex), such that the resulting graph is odd $3$-edge-colorable? 
Additionally, the following conjecture was formally raised later in \cite{petruvsevski2021odd}.

\begin{conjecture}\label{conjecture_deletion}
    (\textbf{Conjecture 2.14 of~\cite{petruvsevski2021odd}})
For any connected loopless graph which is not a Shannon triangle of type $(2,2,2)$ or type $(2,2,1)$,
either $\chi_{\text{odd}}'(G) \leq 3$ 
or there exists a special edge $e$ whose removal will make the graph odd $3$-edge-colorable.
\end{conjecture}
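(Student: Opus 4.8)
\emph{Reformulation.} Observe first that the conclusion ``$G\setminus\{e\}$ is odd $3$‑edge‑colorable for some edge $e$'' is equivalent to ``$G$ admits an odd $4$‑edge‑coloring one of whose color classes is a single edge'': given such a coloring, delete that edge; conversely, given odd subgraphs $H_1,H_2,H_3$ with $E(G)\setminus\{e\}=E(H_1)\sqcup E(H_2)\sqcup E(H_3)$, adjoin $\{e\}$ (which is a copy of $K_2$, hence odd) as a fourth class. So the plan is: for connected $G$ with $\chi_{\text{odd}}'(G)=4$, improve Petru\v{s}evski's bound of $2$ on a smallest color class down to $1$.

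\emph{Reduction and trivial subcase.} Since $G$ is connected and not a Shannon triangle of type $(2,2,2)$ or $(2,2,1)$, Petru\v{s}evski's theorem~\cite{petruvsevski2018odd} gives $\chi_{\text{odd}}'(G)\le 4$; if $\chi_{\text{odd}}'(G)\le 3$ the first alternative of the statement holds, so assume $\chi_{\text{odd}}'(G)=4$. By the second part of Theorem~\ref{Pyber_4}, $G$ then has odd order. Fix, by Petru\v{s}evski~\cite{petruvsevski2018odd}, an odd $4$‑edge‑coloring $E(G)=E(H_1)\sqcup E(H_2)\sqcup E(H_3)\sqcup E(H_4)$ with $|E(H_4)|\le 2$. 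If $|E(H_4)|=0$ then $\chi_{\text{odd}}'(G)\le 3$, a contradiction; if $E(H_4)=\{e\}$, then $H_1,H_2,H_3$ witness $\chi_{\text{odd}}'(G\setminus\{e\})\le 3$ and we are done.

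\emph{Main case $|E(H_4)|=2$.} Since $H_4$ is an odd graph, its two edges are vertex‑disjoint; write $E(H_4)=\{e_1=x_1y_1,\ e_2=x_2y_2\}$ with $x_1,y_1,x_2,y_2$ pairwise distinct. The basic move is to delete $e_1$ and \emph{absorb} $e_2$ into one of $H_1,H_2,H_3$: if $x_2$ and $y_2$ are simultaneously isolated in some $H_i$, then $H_i+e_2$ is again odd and $(H_1,\dots,H_i+e_2,\dots,H_3)$ is an odd $3$‑edge‑coloring of $G\setminus\{e_1\}$. Let $a$ (resp.\ $b$) be the number of indices $i\in\{1,2,3\}$ for which $x_2$ (resp.\ $y_2$) is non‑isolated in $H_i$; counting degrees through the four classes yields $a\equiv\deg_G(x_2)+1$ and $b\equiv\deg_G(y_2)+1\pmod 2$. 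Since $x_2$ is isolated in $3-a$ of the three classes and $y_2$ in $3-b$ of them, a common isolating class exists whenever $a+b\le 2$. Together with the two parity congruences, and with the symmetric option of instead deleting $e_2$ to absorb $e_1$, this disposes of all but a bounded collection of ``doubly obstructed'' configurations of the quadruple $(x_1,y_1,x_2,y_2)$ and the degree parities of its vertices.

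\emph{Obstructed configurations, and the main obstacle.} For the residual configurations I plan two tools. First, an alternating/Kempe‑type recoloring: to force $e_2$ into a class $H_i$ in which $x_2$ is non‑isolated, delete one edge of $H_i$ at $x_2$ and chase the resulting parity defect along edges that may legally be exchanged between two of the three colors, until the defect is absorbed at $y_2$ or cancelled against an already present one; organizing the exchange on a pair of color classes at a time should keep every vertex from ever acquiring positive even degree. Second, a contradiction fallback: in the cases where no such recoloring is available, the obstruction forces enough rigidity near $\{x_1,y_1,x_2,y_2\}$ (few incident edges outside $H_4$, prescribed parities) that $G$ can be given an odd $3$‑edge‑coloring outright, contradicting $\chi_{\text{odd}}'(G)=4$. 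A minor technical point throughout is that $G\setminus\{e_1\}$ need not be connected; but each of its components carries an odd partial coloring, so absorption and recoloring can be localized, and if $e_1$ is a bridge one works with $e_2$ instead. The crux is exactly the handling of the obstructed configurations: Petru\v{s}evski's decomposition is only available as a black box, so the real difficulty is to extract — from the failure of absorption alone — either enough local freedom to recolor or enough structure to force a contradiction, while keeping every color class a genuine odd graph.
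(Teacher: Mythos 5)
Your reformulation, the reduction to odd order, and the starting point (Petru\v{s}evski's odd $4$-edge-coloring with $|E(H_4)|\le 2$, then the parity count showing a common absorbing class exists when $a+b\le 2$) are all correct. But the argument stops being a proof exactly where the theorem becomes hard: the ``obstructed configurations'' are not handled, only planned. The Kempe-type recoloring you invoke has no analogue here that is known to work: color classes of an odd edge-coloring are arbitrary odd subgraphs, not matchings, so deleting an edge of $H_i$ at $x_2$ creates a vertex of positive even degree in $H_i$, and there is no canonical alternating structure along which this defect can be ``chased'' while keeping \emph{all three} classes odd at \emph{every} intermediate vertex; your own phrasing (``should keep every vertex from ever acquiring positive even degree'') concedes this is unverified. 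Likewise the fallback --- that failure of absorption forces enough rigidity near $\{x_1,y_1,x_2,y_2\}$ to $3$-color $G$ outright --- is asserted, not argued, and the ``bounded collection'' of bad configurations is only bounded as a local pattern: the rest of $G$ is arbitrary, so there is no finite case check to fall back on. Since $\chi_{\text{odd}}'(G)=4$ is exactly the regime where global parity obstructions exist, local surgery on a black-box decomposition is the step that needs a genuine idea, and it is missing.

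For comparison, the paper does not start from Petru\v{s}evski's size-$2$ color class at all. It argues structurally: reduce to reduced graphs, dispose of graphs with a cut vertex via coforest/$T$-join decompositions (Lemmas~\ref{cut_vertex1} and~\ref{cut_vertex2}), and for $2$-connected $G$ it analyzes even vertices $v$ with $N'_G(v)=0$, end-blocks of $G-v$, graphs with a minimum feedback vertex set of size one (Lemmas~\ref{feedback_blow_up}, \ref{feedback}), and the two-coloring flexibility of Lemma~\ref{two_ways}, building the single-use fourth color by construction rather than by repairing a given $4$-coloring. That machinery is precisely what substitutes for the recoloring step you left open, so as it stands your proposal has a genuine gap at its central case.
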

 A considerable effort  towards the above conjecture was already made in~\cite{petruvsevski2018odd}. Note that an equivalent way to express the conjecture is that, assuming $\chi_{\text{odd}}'(G) =4$, there exists an odd 
 $4$-edge-coloring in which the color $4$ is used only once. 
 Another related result, presented in~\cite{petruvsevski2023odd}, provided a partial answer by 
 confirming the conclusion 
 for the special class of subdivisions of odd graphs. 
Now we are ready to state the first result of this paper, which fully resolves the above conjecture positively. 
\begin{theorem}\label{main} 
Any connected loopless graph $G$ that is not a Shannon triangle of type $(2,2,2)$ or $(2,2,1)$ is either odd $3$-edge-colorable or admits an odd $4$-edge-coloring with color set $\{1,2,3,4\}$ such that the color $4$  is used only on some edge $e$ whose endvertices are incident with a common color $c\in\{1,2,3\}$. Clearly, in the latter case, 
$\chi_{\text{odd}}'(G\backslash \{e\}) = 3$.
\end{theorem}

Now we move to graph coverability. Pyber also inquired in~\cite{pyber1991covering} 
whether three colors are sufficient to give an odd edge-covering for any simple graph $G$, a question later confirmed  by M\'atrai~\cite{matrai2006covering}.
\begin{theorem}(\cite{matrai2006covering})\label{Matrai_3} 
For any connected simple graph $G$, $\text{cov}_{\text{odd}}(G)\leq 3$.
\end{theorem}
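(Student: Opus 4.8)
We sketch a strategy, bootstrapping from the even-order half of Theorem~\ref{Pyber_4}. If $|V(G)|$ is even, then $\chi_{\text{odd}}'(G)\le 3$ by Theorem~\ref{Pyber_4}, and an odd $3$-edge-coloring is in particular an odd $3$-edge-covering, so $\text{cov}_{\text{odd}}(G)\le 3$. Thus the whole issue is the case $|V(G)|$ odd. Here no edge set $F$ can make every component of $G\setminus F$ of even order (the components of $G\setminus F$ still partition the odd set $V(G)$), so the ``peel off a class and apply the even-order case to the pieces'' idea cannot work directly, and one is forced to pass to an auxiliary graph of even order.

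So assume $|V(G)|$ is odd and fix a vertex $u$. Let $G^{+}$ be $G$ together with a new vertex $z$ and the single edge $uz$; then $G^{+}$ is connected of even order, so Theorem~\ref{Pyber_4} supplies an odd $3$-edge-coloring $E(G^{+})=F_1^{+}\sqcup F_2^{+}\sqcup F_3^{+}$, say with $uz\in F_1^{+}$. Put $F_i:=F_i^{+}\cap E(G)$. Since $uz$ is the only edge of $G^{+}$ at $z$, we have $F_2=F_2^{+}$ and $F_3=F_3^{+}$ (both odd) and $F_1\cup F_2\cup F_3=E(G)$, while $F_1=F_1^{+}\setminus\{uz\}$ is odd at every vertex other than $u$ and satisfies $d_{F_1}(u)=d_{F_1^{+}}(u)-1$. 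As $d_{F_1^{+}}(u)$ is odd (being a degree in the odd subgraph $F_1^{+}$) and positive (as $uz\in F_1^{+}$), either it equals $1$ --- then $F_1$ is already odd and we are done --- or it is at least $3$, and then $u$ is the \emph{unique} vertex at which $F_1$ fails, with $d_{F_1}(u)$ positive and even.

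It remains to repair this single parity defect of $F_1$ at $u$ without destroying the covering property; since we only need a covering, it suffices to enlarge $F_1$ by a set $J\subseteq E(G)\setminus F_1$ making $F_1\cup J$ odd. A short parity check shows $J$ must have odd degree at $u$, even degree at every other vertex of $V(F_1)$, and no even-degree vertex outside $V(F_1)$. Hence, if $u$ can reach some vertex of $V(G)\setminus V(F_1)$ within $G\setminus F_1$, a shortest such path works as $J$: its interior vertices lie in $V(F_1)$ by minimality and only pick up degree $2$. The plan is to show that this reachability can always be arranged --- by choosing $u$ suitably (for instance a low-degree vertex, which limits the color pattern that $G^{+}$ can display at $u$) and, when necessary, re-choosing the odd $3$-edge-coloring of $G^{+}$.

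The main obstacle is exactly this last clause, in the situation where $u$ cannot reach $V(G)\setminus V(F_1)$ in $G\setminus F_1$ --- in particular when $F_1$ spans all of $V(G)$. Then no enlargement can succeed, since a graph whose only odd-degree vertex is $u$ cannot exist. The remedy must instead alter the coloring of $G^{+}$ itself: one wants to reach a coloring in which $uz$ is the \emph{only} edge of its color incident to $u$, so that the restriction to $G$ is automatically odd, and the natural tool is a Kempe-chain-type recoloring between the color of $uz$ and a second color, along an alternating walk designed to shunt the imbalance onto the artificial vertex $z$. Making such an alternating-walk argument rigorous in the multigraph $G^{+}$, where a vertex may carry many edges of one color, and carrying it out uniformly no matter which of the three classes first receives $uz$, is where essentially all of the work lies; the rest is a routine reduction to Theorem~\ref{Pyber_4}.
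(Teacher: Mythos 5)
Your reduction to the even-order case is fine as far as it goes: attaching a pendant vertex $z$ at $u$, invoking the second half of Theorem~\ref{Pyber_4} on $G^{+}$, and restricting to $G$ does leave at most one defective class $F_1$, defective only at $u$ and only by parity; and in the case where $u$ can reach, inside $G\setminus F_1$, a vertex not covered by $F_1$, adding a shortest such path to the class of color $1$ is a correct repair, since a covering tolerates the doubly-colored path edges. But the argument has a genuine gap exactly where you say "essentially all of the work lies": when every vertex of $u$'s component of $G\setminus F_1$ is covered by $F_1$ (in particular when $F_1$ spans $V(G)$), no augmenting set $J$ exists (as you note, by the handshake lemma), and the proposed remedy is only a plan, not a proof. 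A Kempe-type $1$--$2$ swap along an alternating path flips the parities of \emph{both} color degrees at \emph{both} endpoints, so it repairs color $1$ at $u$ only at the cost of breaking color $2$ there (unless color $2$ is absent at $u$), and it disturbs both classes at the other endpoint as well; you give no argument that a suitable endpoint or a suitable alternating walk exists, nor that the choice of a "low-degree" $u$ helps (in a graph of large minimum degree there is none). Since this bad configuration is not shown to be avoidable, the proof is incomplete at its crucial step.

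For context, the paper does not prove this statement at all: Theorem~\ref{Matrai_3} is quoted from M\'atrai's paper, and M\'atrai's actual argument is a substantially different (and genuinely nontrivial) construction rather than a pendant-vertex-plus-recoloring reduction. So your proposal cannot be compared favorably against an in-paper proof; as it stands it is a plausible first reduction together with an unproven claim that the hard case can always be recolored away, and that claim is precisely the content of the theorem.
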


This result was later generalized by Petru{\v{s}}evski and {\v{S}}krekovski~\cite{petruvsevski2019coverability} to multigraphs, by characterizing which multigraphs are odd $3$-edge-coverable. In addition, as a connection between $\chi_{\text{odd}}'(G)$ and $\text{cov}_{\text{odd}}(G)$, as well as a tool for obtaining more structural information about edge colors and edges, Petru{\v{s}}evski and {\v{S}}krekovski~\cite{petruvsevski2019coverability} defined the overlapping function. In more detail, 
for a $3$-edge-covering $\phi: E(G)\to 2^{[k]}\backslash \emptyset$, 
let $\text{ovl}(\phi)$ be the sum of sizes of all images under $\phi$ that are not singletons. The following are some examples of the values of the function $\text{ovl}(\phi)$:
$$\text{ovl}(\phi) =
  \begin{cases}
    0, & \text{ if } \phi \text{ is an edge-coloring.} \\
    2, & \text{ if } \phi \text{ has exactly one edge receiving two colors, with others receiving one color.} \\
    3, 
    & \text{ if } \phi \text{ has exactly one edge receiving three colors, with others receiving one color. } \\
    4, & \text{ if }\phi \text{ has exactly two edges receiving two colors each, with others receiving one color.}
  \end{cases}
$$

Given a graph $G$ with $\text{cov}_{\text{odd}}(G)\leq 3$, 
define 
\begin{equation}
\min \text{ovl}(G)= \min_{\phi} \text{ovl}(\phi),
\end{equation}
where the minimum is taken over all odd 3-edge-coverings $\phi$ of $G$. Immediately from the above definitions, 
if $G$ is odd 3-edge-colorable, then $\min \text{ovl}(G)=0$. 
As the main result in~\cite{petruvsevski2019coverability}, it was proved that except for a list of multigraphs with exactly $3$ vertices, for any other connected loopless graph $G$, one has $\text{cov}_{\text{odd}}(G)\leq 3$ and $\min \text{ovl}(G)\in \{0,2,3, 4\}$. In the same paper, it was also shown that there do exist multigraphs $G$ for which 
$\min \text{ovl}(G)$ is at least $4$. So this leads to the following natural conjecture.

\begin{conjecture}(\textbf{Conjecture 2.5 of~\cite{petruvsevski2021odd}})
    There is no simple graph $G$ such that $\min \text{ovl}(G)=4$. 
\end{conjecture}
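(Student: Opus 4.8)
The plan is to establish the stronger statement announced in the abstract: every connected simple graph $G$ admits an odd $3$-edge-covering in which exactly one edge receives two colors. This forces $\min\text{ovl}(G)\in\{0,2\}$, and in particular $\min\text{ovl}(G)\ne 4$, which is the conjecture. If $\chi_{\text{odd}}'(G)\le 3$ then $G$ is odd $3$-edge-colorable and $\min\text{ovl}(G)=0$, so assume $\chi_{\text{odd}}'(G)=4$. A simple graph is never a Shannon triangle of type $(2,2,2)$ or $(2,2,1)$, so Theorem~\ref{main} provides an odd $4$-edge-coloring with classes $E_1,E_2,E_3,E_4$, $E_4=\{e\}$, $e=xy$; thus $E_1,E_2,E_3$ is an odd $3$-edge-coloring of $G\setminus\{e\}$, and it remains to convert this into an odd $3$-edge-covering $\psi$ with $\text{ovl}(\psi)=2$.

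The first step is a reformulation: an odd $3$-edge-covering with overlap $2$ is precisely a partition $E(G)=A\sqcup B\sqcup C\sqcup\{f\}$ for which $A\cup\{f\}$, $B\cup\{f\}$ and $C$ are all odd subgraphs (the edge $f$ gets the two colors carried by $A\cup\{f\}$ and $B\cup\{f\}$, every other edge one color), and $A\cup\{f\}$ is odd exactly when $A$ has even degree at both endpoints of $f$ and odd-or-zero degree everywhere else. Taking $f=e$, the problem becomes: pick a pair $\{i,j\}\subseteq\{1,2,3\}$ and re-$3$-edge-color $G\setminus\{e\}$ so that the classes $i,j$ have even degree at $x$ and at $y$, while the third class, and all classes at all other vertices, keep odd-or-zero degree. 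In every component of the relevant colour union disjoint from $\{x,y\}$ one simply keeps $E_1,E_2,E_3$, so the whole difficulty is local to $x$ and $y$. A short parity analysis controls this local picture: recolouring $e$ by a single colour $i$ whenever $x$ and $y$ are both isolated in $E_i$ would give an odd $3$-edge-coloring of $G$, contradicting $\chi_{\text{odd}}'(G)=4$; hence for each $i$ at least one of $x,y$ is active in $E_i$, so $t_x+t_y\ge 3$ where $t_x,t_y$ count the classes in which $x$, resp.\ $y$, has positive degree, and $t_x\equiv\deg_G(x)-1$, $t_y\equiv\deg_G(y)-1\pmod 2$. A case analysis on $(t_x,t_y)$, combined with alternating-path recolourings between two colour classes (flipping such a path changes class-degrees by one unit at each of its two endpoints and not at all elsewhere), is then used to consolidate the edges incident to $x$ and to $y$ into the required pattern; when two classes are too rigid for this one also moves a few edges of the third class at $x$ or $y$ (so $C\ne E_3$), or re-invokes Theorem~\ref{main} with a different special edge $e$, the fourth colour supplying the extra slack.

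The hard part is performing the correction at $x$ and at $y$ \emph{simultaneously} with one common pair $\{i,j\}$: an alternating path that fixes the parity at $x$ can terminate cleanly only at a leaf of one of the two classes it uses, or at $y$, or back at $x$, and a careless choice either undoes the parity at $y$ or creates an even-degree vertex. The delicate configuration is when $x$ and $y$ lie in a single component of the two-colour union and the natural $x$--$y$ alternating path self-interferes; resolving this is where one must choose the colour pair on $e$ carefully, route part of the correction through the third colour, and use simplicity of $G$ to exclude the Shannon-triangle-type local structures that are exactly what allow $\min\text{ovl}=4$ for multigraphs. Once all cases are cleared, the resulting $\psi$ has overlap $2$, so $\min\text{ovl}(G)=2$, proving the conjecture, indeed its strong form.
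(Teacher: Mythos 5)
Your overall strategy---derive the conjecture from Theorem~\ref{main} by turning an odd $4$-edge-coloring whose fourth class is a single edge $e=xy$ into an odd $3$-edge-covering with one doubly-colored edge---is not how the paper proceeds, and, more importantly, as written it has a genuine gap: the entire conversion step is only announced, not performed. The correct easy observations (the reformulation as a partition $A\sqcup B\sqcup C\sqcup\{f\}$ with $A\cup\{f\}$, $B\cup\{f\}$, $C$ odd, and the parity facts $t_x+t_y\ge 3$, $t_x\equiv d_G(x)-1$, $t_y\equiv d_G(y)-1 \pmod 2$) do not by themselves produce the covering; everything hinges on the ``case analysis on $(t_x,t_y)$ combined with alternating-path recolourings,'' and that analysis is exactly where the difficulty lies. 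An alternating $i/j$-flip changes the two class-degrees by one at \emph{both} ends of the path, and a maximal such path starting at $x$ need not end at $y$, nor at a vertex where the flip is harmless: the only clean terminals are vertices with degree $1$ in one of the two classes and $0$ in the other, and nothing guarantees their existence or reachability. You explicitly flag this (``the hard part,'' ``the delicate configuration'') and then assert ``once all cases are cleared,'' which is to say the proof of the critical step is missing. Two of your proposed escape hatches are also not available as stated: Theorem~\ref{main} asserts the existence of \emph{some} special edge and gives you no control to ``re-invoke it with a different special edge,'' and ``use simplicity of $G$ to exclude Shannon-triangle-type local structures'' is not an argument---the known $\min\text{ovl}\ge 4$ examples are multigraphs on three vertices, but nothing in your sketch shows how simplicity rescues the stuck configurations of the alternating-path surgery.

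For comparison, the paper does not deduce Theorem~\ref{covering_conjecture} from Theorem~\ref{main} at all (which is some indication that such a reduction is not routine). Instead it argues by minimal counterexample on the number of vertices via Proposition~\ref{main_step}: for an even vertex $v$ of a putative counterexample, either $v$ is a cut vertex and one deletes a suitable component of $G-v$ (using Lemma~\ref{cover_odd_component} and Lemma~\ref{coforest} to extend coverings back), or $v$ is not a cut vertex, in which case Corollary~\ref{connectivity_at_least_3} and Lemma~\ref{claim23} force $G-v$ to have an end-block $B$ whose internal part is joined completely to $v$ and to the cut vertex $s$, and one deletes $B-s$; in both cases a smaller counterexample is produced, and small graphs are checked directly. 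If you want to salvage your route, you would need to actually carry out the simultaneous parity correction at $x$ and $y$ (including the case where $x$ and $y$ lie in one component of the two chosen color classes), or else abandon the fixed edge $e$ and prove you may move the doubly-colored edge elsewhere; at present the proposal establishes only the easy reductions surrounding the real problem.
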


Our second result of this paper strongly confirms the above conjecture.

\begin{theorem}\label{covering_conjecture}
    For any simple connected  graph $G$, we have $\min \text{ovl}(G)\leq 2$. 
\end{theorem}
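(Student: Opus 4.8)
The plan is to bootstrap from Theorem~\ref{main}. If $\chi_{\text{odd}}'(G)\le 3$, then any odd $3$-edge-coloring of $G$ is an odd $3$-edge-covering with $\text{ovl}=0$, and we are done. So assume $\chi_{\text{odd}}'(G)\ge 4$. As $G$ is simple it is not a Shannon triangle of type $(2,2,2)$ or $(2,2,1)$, so Theorem~\ref{Pyber_4} gives $\chi_{\text{odd}}'(G)=4$ and Theorem~\ref{main} furnishes an odd $4$-edge-coloring whose color classes we write $E_1,E_2,E_3$ and $\{e\}$, where $e=uv$ is the unique edge receiving the fourth color. For a vertex $w$ put $A(w)=\{\,j\in\{1,2,3\}:\deg_{E_j}(w)>0\,\}$; since each $E_j$ is an odd graph, $w\in V(E_j)$ iff $j\in A(w)$, and then $\deg_{E_j}(w)$ is odd.

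Next I would extract the constraints implied by $\chi_{\text{odd}}'(G)=4$. If some color $j$ lay outside $A(u)\cup A(v)$, then $E_j\cup\{e\}$ would be odd (only $u$ and $v$ change degree, each becoming $1$), and recoloring $e$ by $j$ would give an odd $3$-edge-coloring, a contradiction; hence $A(u)\cup A(v)=\{1,2,3\}$. It is convenient to reformulate the goal: recalling that $\text{cov}_{\text{odd}}(G)\le 3$ by Theorem~\ref{Matrai_3}, it suffices to exhibit one edge $f^{\ast}=xy$ and an odd $3$-edge-coloring of $G\backslash\{f^{\ast}\}$ in which both $x$ and $y$ lie in a single common color class; adjoining $f^{\ast}$ to the other two classes then produces an odd $3$-edge-covering with exactly one edge of two colors, so $\min\text{ovl}(G)\le 2$. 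Since we only have control over $G\backslash\{e\}$, I would take $f^{\ast}=e$ and try to modify $E_1,E_2,E_3$ so that $u$ and $v$ become confined to one common color.

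The heart of the matter is this repair step. Adjoining $e$ to a single class $E_a$ gives classes $F_a=E_a\cup\{e\}$, $F_b=E_b$, $F_c=E_c$, of which only $F_a$ can fail to be odd, and it does so exactly at the vertices of $B=\{w\in\{u,v\}:a\in A(w)\}$, which then have even positive degree in $F_a$. When $A(u)\ne A(v)$, choose $a$ lying in exactly one of $A(u),A(v)$; then $|B|=1$, say $B=\{u\}$, and I would finish by finding an edge $f=uw\in E_b\cup E_c$ with $w\notin V(E_a)$ and giving it its old color together with $a$. This flips the parity of $u$ in $F_a$ back to odd while bringing $w$ in with degree $1$, leaves $F_b$ and $F_c$ unchanged, and creates overlap exactly $2$. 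If no such neighbour $w$ is available, one first performs an alternating-path color swap between two of the classes to transport the defect to a vertex that does have a usable neighbour; here the hypothesis $\chi_{\text{odd}}'(G)=4$ is crucial, because it forbids such a swap from ever closing into a genuine odd $3$-edge-coloring, and this is what tames the process and forces it to terminate in the ``one extra color'' resolution rather than in an actual recoloring.

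The principal obstacle, as I see it, is the case $A(u)=A(v)=\{1,2,3\}$, which is forced precisely when $u$ and $v$ are both even vertices of $G$ with odd degree in each $E_j$; note that were $e$ incident to an odd vertex, a degree-parity count would already yield $A(u)\ne A(v)$. Here adjoining $e$ to any single class produces two defect vertices at once, a parity obstruction rules out a purely local repair, and trying to displace the two defects by recolorings cannot succeed at every vertex simultaneously, since that would again contradict $\chi_{\text{odd}}'(G)=4$. I expect this case to require either showing that the coloring delivered by Theorem~\ref{main} can be taken (or locally re-derived) with $e$ incident to an odd vertex, or a direct global rerouting confining $u$ and $v$ to a common color of some $G\backslash\{f^{\ast}\}$; a careful case analysis on $(|A(u)|,|A(v)|)$ and on the local structure around $u$ and $v$, using the full list of configurations forbidden by $\chi_{\text{odd}}'(G)=4$, should settle it.
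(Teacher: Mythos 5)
Your reduction is sound as far as it goes: with $\chi_{\text{odd}}'(G)=4$ and the coloring from Theorem~\ref{main} with color $4$ only on $e=uv$, you correctly deduce $A(u)\cup A(v)=\{1,2,3\}$, and your local repair in the case $A(u)\neq A(v)$ (recolor $e$ with a color $a$ missing at $v$, then give some edge $f=uw$ with $w\notin V(E_a)$ its old color together with $a$) does produce an odd $3$-edge-covering with $\text{ovl}=2$ when such a $w$ exists. But the argument has two genuine gaps. First, even in that case the required neighbour $w\notin V(E_a)$ need not exist -- $E_a$ may be a spanning odd subgraph, in which case no vertex of $G$ is usable and no amount of ``transporting the defect'' along vertices helps. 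The proposed fix, an ``alternating-path color swap'' between two classes, is not a parity-safe operation here: the color classes are odd subgraphs, not matchings, so exchanging colors along a path changes degree parities at interior vertices as well, and you give no invariant showing the process is well defined or terminates. The remark that $\chi_{\text{odd}}'(G)=4$ ``forbids the swap from closing into an odd $3$-edge-coloring'' only rules out one outcome; it does not show the process ends in the desired two-colored edge. Second, and more seriously, the case $A(u)=A(v)=\{1,2,3\}$ (both endpoints even vertices meeting all three colors) is explicitly left open: ``I expect this case to require \dots should settle it'' is a statement of hope, not a proof, and the suggested fallback -- rerunning Theorem~\ref{main} so that $e$ is incident to an odd vertex -- is not something Theorem~\ref{main} provides (indeed $G$ may have few or no odd vertices available in the relevant position). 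So the proposal is an incomplete program, not a proof.

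For contrast, the paper does not bootstrap from Theorem~\ref{main} at all: it argues by minimal counterexample and an inductive step (Proposition~\ref{main_step}) that removes, at an even vertex $v$, either a component of $G-v$ or an end-block of $G-v$, using the covering-specific lemmas of Section 3 (Lemma~\ref{cover_odd_component}, Lemma~\ref{claim23}, Corollary~\ref{connectivity_at_least_3}) to show a smaller graph would still satisfy $\min\text{ovl}\geq 3$. If you want to pursue your route, the missing content is precisely a replacement for that structural induction in the two situations above; as written, those are the points where your argument would fail.
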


The article is organized as follows. Section 2 is a brief overview of the notation and preliminary results. Subsequently, Section 3 addresses some important special cases in obtaining certain partial results. 
We then proceed to prove Theorem~\ref{main} in Section 4 and Theorem~\ref{covering_conjecture} in Section 5. In Section 6, we present some concluding remarks.

\section{Notation and Preliminary Results}

In this section, we introduce some notation that we will frequently use in this work.  We mention only those points of relevance that are liable to variations. The parameters $v(G)=|V(G)|$ and $e(G)=|E(G)|$ are \textit{order} and \textit{size} of $G$, respectively. A vertex $v$ is a \textit{cut vertex} of $G$ if $G-v$ has more (connected) components than $G$; otherwise, $v$ is an \textit{internal vertex} of $G$. 
Let $v$ be a cut vertex of $G$, and let $C_1,\ldots,C_k$ be the components of $G-v$. 
Then each $G[V(C_i)\cup \{v\}]$, for $i=1,\ldots,k$, is called a \textit{$v$-lobe} of $G$. 
 A \textit{block graph} is a connected graph without cut vertices.  Given a graph $G$, any maximal block subgraph $B$ is a \textit{block} of $G$. For a block $B$ of $G$, each vertex $v\in V(B)$ that is not a cut vertex of $G$ is an \textit{internal vertex} of $B$ (and of $G$); the collection of internal vertices of $B$ is denoted by $\mathrm{Int}_G(B)$.
Any block of $G$ that contains at most one cut vertex of $G$ is 
 an \textit{end-block} of $G$. Every vertex $v$ of a block graph $G$ has a neighbor among the internal vertices of each end-block of $G-v$.  

 \smallskip
 
Let $T$ be an even-sized subset of $V(G)$. A spanning subgraph $H$ of $G$ is a \textit{$T$-join} if $d_H(v)$ is odd for all $v\in T$ and even for all $v\notin T$.
The following lemma contains some long-known facts. 

\begin{lemma}\label{forest_coforest}
(\cite{schrijver2002combinatorial}) Given a connected graph $G$ and an even-sized subset $T$ of $V(G)$,
\begin{enumerate}
\item[$(1)$] there exists a $T$-join of $G$ that is a forest;
\item[$(2)$] there exists a $T$-join of $G$ that is a coforest;
\item[$(3)$] additionally, if $G$ is of even order then it contains a spanning odd coforest.
\end{enumerate}
\end{lemma}

\begin{lemma}\label{no_bridge} (Proposition 2.5 of ~\cite{petruvsevski2018odd})
If $G$ is a connected graph that contains a bridge, then $\chi_{\text{odd}}'(G)\leq 3$. 
\end{lemma}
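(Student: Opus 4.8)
The plan is to deduce the statement from Pyber's theorem (Theorem~\ref{Pyber_4}) by a parity argument localized at the bridge. Write $e = uv$ for the bridge and let $G_u, G_v$ be the two components of $G\backslash\{e\}$, containing $u$ and $v$ respectively; both are connected. If $|V(G)|$ is even, the even-order part of Theorem~\ref{Pyber_4} already gives $\chi_{\text{odd}}'(G)\le 3$, so I may assume $|V(G)|$ is odd, and after relabeling that $|V(G_u)|$ is even and $|V(G_v)|$ is odd.

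The construction is to color the two sides separately and glue them along $e$. Since $|V(G_v)|$ is odd, the graph $\tilde G_v := G_v + e$ obtained by attaching $e$ as a pendant edge at $v$ (with pendant endpoint $u$) is connected of even order, hence carries an odd $3$-edge-coloring $\psi$ by Theorem~\ref{Pyber_4}; likewise $G_u$ is connected of even order and carries an odd $3$-edge-coloring $\phi$. Together these assign a color to every edge of $G$. I identify both palettes with $\{1,2,3\}$, and since permuting colors preserves oddness I may further arrange that the bridge receives any prescribed color $c := \psi(e)$. Setting $E_j := \phi^{-1}(j) \cup \psi^{-1}(j)$ then gives a candidate odd $3$-edge-coloring of $G$.

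It remains to check oddness of each $E_j$. Every vertex other than $u$ is incident only to edges colored by a single one of $\phi,\psi$ (vertices of $G_u$ other than $u$ by $\phi$, and all vertices of $G_v$, including $v$ together with the bridge, by $\psi$), so oddness there is inherited directly. At $u$ the color-$j$ degree in $E_j$ equals $d^\phi_j(u) + [\,j=c\,]$, which is odd for each used color exactly when $u$ carries no edge of color $c$ in $\phi$. Here I invoke the parity identity that at any vertex the number of colors used by an odd edge-coloring is congruent modulo $2$ to its degree: if $d_{G_u}(u)$ is even then $u$ uses at most two colors in $\phi$, and I choose $c$ outside them, disposing of that subcase. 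The only surviving subcase is $d_{G_u}(u)$ odd, where the identity forces $u$ to use an odd number of colors, so I must instead produce a $\phi$ in which $u$ is monochromatic.

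The main obstacle is thus the sublemma: a connected graph $H$ of even order with $d_H(u)$ odd admits an odd $3$-edge-coloring in which all edges at $u$ share one color. I would attack this by induction on $d_H(u)$: the base case $d_H(u)=1$ is immediate, since a leaf uses exactly one color in any coloring furnished by Theorem~\ref{Pyber_4}; for the step I would split off a pair of edges at $u$ (lowering $d_H(u)$ by two while preserving even order and, via a Mader-type choice, connectivity), apply the inductive hypothesis, and then lift the split back while repairing the parities introduced at the two affected neighbors. The delicate point, which I expect to be the crux of the whole argument, is performing this lift within only three colors and handling the situations where splitting off threatens connectivity or creates or destroys parallel edges; this is precisely where the reduced-graph conventions of Section~2, together with a careful local recoloring near $u$, will be needed.
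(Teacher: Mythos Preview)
The paper does not supply its own proof of this lemma; it merely cites Proposition~2.5 of~\cite{petruvsevski2018odd}. That aside, your approach contains a genuine error: the sublemma you isolate in the last paragraph is false. Take $H$ on vertex set $\{u,a,b,c,d,e\}$ with edges $ua,\,ab,\,uc,\,ud,\,cd,\,ce,\,de$. This graph is connected of even order with $d_H(u)=3$ odd, yet no odd $3$-edge-colouring of $H$ is monochromatic at $u$: setting $ua=uc=ud=1$, the parity constraint at $c$ forces $(cd,ce)\in\{(1,1),(2,3),(3,2)\}$, the symmetric constraint at $d$ forces $(cd,de)$ into the same list, and the constraint $ce\neq de$ at $e$ then eliminates all three possibilities. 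Hence if $G$ is $H$ together with a pendant bridge $uv$ (so $G_u=H$, $G_v=\{v\}$, $|V(G)|=7$), your gluing scheme finds no colour $c$ to assign the bridge, even though $G$ does admit an odd $3$-edge-colouring: take colour~$1$ on $\{ua,uc,uv,cd,ce\}$, colour~$2$ on $\{ud,ab\}$, colour~$3$ on $\{de\}$.

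The defect is structural rather than a missing detail. By insisting that $\phi$ already be an odd colouring of $G_u$, you force every colour present at $u$ in $\phi$ to occur an odd number of times there; but in a correct global colouring the bridge's colour must occur an \emph{even} number of times among the $G_u$-edges at $u$ (as in the example above, where colour~$1$ hits $u$ twice inside $G_u$). The remedy is to build the $G_u$ side not as an odd $3$-colouring of $G_u$ but via a $T$-join/coforest decomposition that is allowed to be even at $u$, in the spirit of Lemmas~\ref{coforest} and~\ref{feedback_blow_up} and their use in the cut-vertex arguments of Section~\ref{special_cases}. Your proposed splitting-off induction cannot rescue the plan, since it is aimed at a statement that is simply not true.
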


\begin{lemma}\label{coforest}(Proposition 2.6 of ~\cite{petruvsevski2018odd})
    Let $G$ be a connected graph and $v$ be an internal vertex. Let $T\subseteq V(G)$ be an even-sized set. Then there exists a $T$-join $H$ of $G$ which is a coforest, such that the following additional property holds for $v$ in $\hat{H}$, where $\hat{H}=G\backslash E(H)$. 
    \begin{enumerate}
    \item[(a)] If $d_G(v)$ is odd and $v\in T$, or $d_G(v)$ is even and $v\notin T$, then $d_{\hat{H}}(v)=0$.
    \item[(b)] If $d_G(v)$ is odd and $v\notin T$, or $d_G(v)$ is even and $v\in T$, then $d_{\hat{H}}(v)=1$.
\end{enumerate}      
Moreover, in case (b), we can indicate a specific edge $e$ incident to $v$ such that $e\in E(\hat{H})$. 
\end{lemma}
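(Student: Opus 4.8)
The plan is to pass to the edge-complement. Write $O$ for the set of odd-degree vertices of $G$, and for a $T$-join $H$ put $J:=\hat H=G\setminus E(H)$. The crucial observation is that a spanning subgraph $H$ of $G$ is a $T$-join if and only if $J$ is a $T^{*}$-join, where $T^{*}:=T\triangle O$; moreover, by the definition of a coforest from~\cite{petruvsevski2018odd}, $H$ is a coforest precisely when $J$ is a forest, and of course $d_{\hat H}(v)=d_{J}(v)$. Since $|T|$ and $|O|$ are both even, $|T^{*}|$ is even. A short case analysis over the four parity/membership combinations shows that the hypothesis of Case (a) is equivalent to $v\notin T^{*}$ and that of Case (b) is equivalent to $v\in T^{*}$. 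Hence it suffices to construct a forest $T^{*}$-join $J$ of $G$ with $d_{J}(v)=0$ when $v\notin T^{*}$, and with $d_{J}(v)=1$ attained at a prescribed edge $e$ incident to $v$ when $v\in T^{*}$; then $H:=G\setminus E(J)$ is the desired $T$-join coforest.

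First I would treat the case $v\notin T^{*}$. Since $v$ is an internal vertex, $G':=G-v$ is connected, and $T^{*}\subseteq V(G')$ has even size, so Lemma~\ref{forest_coforest} provides a forest $T^{*}$-join $J$ of $G'$. Viewed as a spanning subgraph of $G$, this $J$ is a forest, satisfies $d_{J}(v)=0$, and is still a $T^{*}$-join of $G$ because $v\notin T^{*}$. This settles Case (a).

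For the case $v\in T^{*}$, fix any edge $e=vw$ incident to $v$ (such an edge exists, since $d_{J}(v)$ must be odd and hence $d_{G}(v)\ge 1$), and decide in advance that $e\in E(J)$. Then the remaining part $J':=J-v$ must be a forest $T'$-join of $G'=G-v$, where $T':=(T^{*}\setminus\{v\})\triangle\{w\}$, the toggle at $w$ recording the contribution of $e$; one checks that $|T'|$ is even. Lemma~\ref{forest_coforest} yields such a $J'$, and $J:=J'+e$ is a forest, since we only attach the pendant vertex $v$ via $e$; by construction $J$ is a $T^{*}$-join of $G$ with $d_{J}(v)=1$ witnessed by $e$. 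Translating back, $H=G\setminus E(J)$ is a $T$-join which is a coforest, and $d_{\hat H}(v)=d_{J}(v)$ has the stated value, with $e\in E(\hat H)$ in Case (b).

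I do not expect a serious obstacle: all the content sits in the complement reformulation together with the parity bookkeeping for $T^{*}$ and $T'$, after which both cases reduce immediately to Lemma~\ref{forest_coforest} applied to the connected graph $G-v$. The one place that needs care is verifying the two equivalences ``Case (a) $\Leftrightarrow v\notin T^{*}$'' and ``Case (b) $\Leftrightarrow v\in T^{*}$'', i.e.\ that each alternative listed in (a) places $v$ outside $T\triangle O$ while each alternative in (b) places it inside; this is a routine check over finitely many cases, and it is also where the hypothesis that $v$ is internal gets used, via the connectedness of $G-v$.
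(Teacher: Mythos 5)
Your argument is correct. Note that the paper itself gives no proof of this lemma: it is quoted as Proposition 2.6 of the cited reference, so there is nothing internal to compare against; your write-up supplies a valid self-contained derivation. The complementation step is exactly right: since $d_G(x)=d_H(x)+d_{\hat H}(x)$, a spanning subgraph $H$ is a $T$-join iff $J=\hat H$ is a $(T\triangle O)$-join, coforests correspond to forests, and the parity bookkeeping ($|T^*|$ even, Case (a) $\Leftrightarrow v\notin T^*$, Case (b) $\Leftrightarrow v\in T^*$, $|T'|=|(T^*\setminus\{v\})\triangle\{w\}|$ even) all checks out, including in the multigraph setting since $G$ is loopless so $w\neq v$. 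Applying Lemma~\ref{forest_coforest} to the connected graph $G-v$ and, in Case (b), attaching $v$ as a pendant vertex via the prescribed edge $e=vw$ does yield a forest $T^*$-join with $d_J(v)=0$ or $1$ as required, and your version even delivers the stronger reading of the "moreover" clause (the edge $e$ may be prescribed in advance), which is how the lemma is actually invoked later in the paper (e.g.\ in Lemma~\ref{claim23} and Lemma~\ref{lemma3.4}). One small slip in your closing remark: the internality of $v$ is not used in verifying the two membership equivalences, which are pure parity statements; it is used only to guarantee that $G-v$ is connected so that Lemma~\ref{forest_coforest} applies — which your main construction already invokes correctly.
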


Given an edge-coloring or edge-covering $\varphi$ of graph $G$, for any color $c$ let $E_c$ denote the \textit{color class of $c$}, that is, $E_c=\varphi^{-1}(c)$. The spanning subgraph of $G$ with edge set $E_c$ is denoted by $G_c$. The \textit{color degree} of $c$ at a vertex $v$ is the degree $d_{G_c}(v)$ of $v$ with respect to the graph $G_c$. A color $c$ is \textit{odd} (respectively, \textit{even}) \textit{at $v$} if $v$ the color degree $d_{G_c}(v)$ is odd (respectively, even). A color $c$ \textit{appears at $v$ under $\varphi$} if $d_{G_c}(v)>0$. We say that $\varphi$ is \textit{odd} (respectively, \textit{even}) \textit{at $v$} if each color appearing  at $v$ is odd (respectively, even). Similarly, we say that $\varphi$ is odd (respectively, even) \textit{away from $v$} if $\varphi$ is odd (respectively, even) at every $w\in V(G)-v$; no assumptions about the parity of the color degrees at $v$ are made.

\begin{lemma}\label{forest}
(\cite{matrai2006covering} and \cite{petruvsevski2018odd}) Any forest $F$ is odd $2$-edge-colorable. 
Moreover, for a fixed vertex $v$ in $F$, any local coloring of $E_F(v)$ that uses at most two colors extends to a 2-edge-coloring of $F$ that is odd away from $v$. 
\end{lemma}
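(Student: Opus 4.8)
The plan is to reduce the whole statement to one claim about trees and then prove that claim by a single greedy pass directed away from the distinguished vertex.

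First I would observe that it suffices to treat each connected component of $F$ separately: distinct components share no vertex, so the degree of a vertex in any colour class depends only on the colouring of its own component. The component $T_0$ containing $v$ will be handled by the ``moreover'' statement. For every other component $T'$ I would pick an arbitrary leaf $v'$ of $T'$ (assuming $T'$ has an edge at all), colour the pendant edge at $v'$ with colour $1$, and apply the same claim to $(T',v')$; the key point is that a colouring which is odd away from a \emph{leaf} is odd everywhere, because a leaf has degree $1$ and hence automatically has odd degree in the unique colour class meeting it. Gluing the colourings of all components gives an odd $2$-edge-colouring of $F$, which yields the first sentence, and the ``moreover'' part is precisely the claim applied to $T_0$. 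Thus everything reduces to the following.

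\textbf{Key claim.} For a tree $T$, a vertex $v\in V(T)$, and any map $\phi_0\colon E_T(v)\to\{1,2\}$, there is an extension $\phi\colon E(T)\to\{1,2\}$ such that for every $u\in V(T)\setminus\{v\}$ and every colour $j$ that appears on an edge at $u$, the number of edges of colour $j$ incident to $u$ is odd. To prove it I would root $T$ at $v$ and process the vertices in non-decreasing order of distance from $v$; processing a vertex $u$ means assigning colours to the edges joining $u$ to its children. The edges at $v$ are never assigned here (they are fixed by $\phi_0$) and no parity constraint is imposed at $v$. When $u\neq v$ is processed, the edge $e_u$ joining $u$ to its parent has already received a colour, say $c$, while processing the parent of $u$; let $c'$ be the other colour. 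If $u$ has $m$ children, give all $m$ child-edges colour $c$ when $m$ is even, and give $m-1$ of them colour $c$ and the remaining one colour $c'$ when $m$ is odd. Then at $u$ colour $c$ occurs $m+1$ or $m$ times (odd in both cases) and colour $c'$ occurs $0$ or $1$ times, so every colour present at $u$ has odd multiplicity. Since all later steps colour only edges strictly farther from $v$, the status of $u$ is never disturbed again; after relabelling the (at most two) colours used at $v$ back to the original palette, this is the required extension.

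The content is light, so there is no real obstacle; the one thing that must be gotten right is the \emph{direction} of the sweep. A leaves-first greedy can get stuck — a vertex that already sees four coloured edges split evenly between the two colours, with only its edge toward the root still free, cannot be completed — whereas sweeping away from $v$ leaves exactly one free edge at each vertex at the moment it is handled, and the elementary parity count above then never fails. The remaining points (that every edge eventually gets a colour, that the per-component colourings combine, and the leaf observation) are routine.
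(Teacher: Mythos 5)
Your proof is correct and takes essentially the same route as the paper: the paper likewise fixes a vertex $v$, colors $E_F(v)$ suitably, and greedily extends outward, which is exactly your root-and-sweep argument with the parity bookkeeping at each vertex made explicit. The only point worth noting is that for the first statement (where oddness is needed at the starting vertex too) the paper chooses the local coloring at $v$ to be odd at $v$ (monochromatic if $d_F(v)$ is odd, an odd--odd split if $d_F(v)$ is even), which your leaf observation achieves just as well.
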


For the first statement above, 
we can start by considering any non-isolated vertex $v\in F$.
If $d_F(v)$ is odd, we can assign the color $1$ to $E_F(v)$.
If $d_F(v)$ is even, we can assign the colors $1$ and $2$ to $E_F(v)$ arbitrarily, ensuring that both colors are used an odd number of times. 
Then, one can greedily extend the above coloring to an odd $2$-edge-coloring of $F$, as there are no cycles. Let us remark in passing that the second part of Theorem~\ref{Pyber_4} follows easily from the above lemmas~\ref{forest_coforest}$(3)$ and~\ref{forest}.

\smallskip

\begin{figure}[ht!]
	$$
		\includegraphics[scale=0.9]{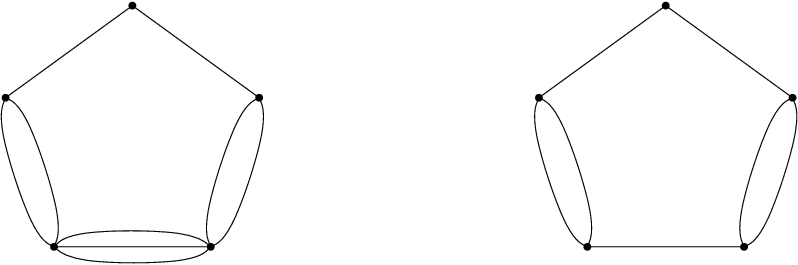}
	$$
	\caption{A graph $G$ (left) and its reduction $\mathrm{red}(G)$ (right), such that $\chi'_{\text{odd}}(G)=3$ and $\chi'_{\text{odd}}(\mathrm{red}(G))=4$.}
	\label{fig:Sereni}
\end{figure}

\begin{figure}[ht!]
	$$
		\includegraphics[scale=0.9]{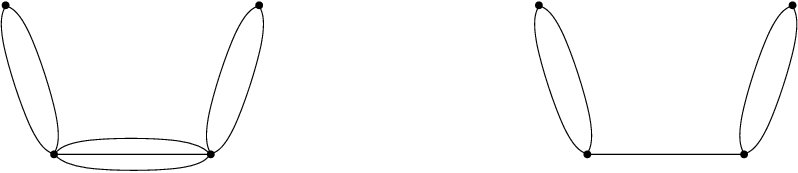}
	$$
	\caption{A graph $G$ (left) and its reduction $\mathrm{red}(G)$ (right), such that $\text{cov}_{\text{odd}}(G)=2$ and $\text{cov}_{\text{odd}}(\mathrm{red}(G))=3$.}
	\label{fig:counterexample}
\end{figure}

As defined in~\cite{luvzar2014odd}, the \textit{reduction} $\mathrm{red}(G)$ of a loopless graph $G$ is a spanning subgraph of $G$ where each odd-sized (respectively, even-sized) bouquet $\mathcal B$ is reduced to size $1$ (respectively, size 2).  Clearly, $\mathrm{red}(G)$ is of multiplicity at most $2$ and is uniquely determined by $G$. If $G=\mathrm{red}(G)$ then we call $G$ a \textit{reduced} graph. Observe that for every Shannon triangle $S$, $\mathrm{red}(S)$ is depicted in Fig.~\ref{fig:fourshannons} and is of the same type as $G$. The inequalities~\eqref{ineq}  below are easily shown.

\begin{equation}
         \label{ineq}
\chi_{\text{odd}}'(G)\leq \chi_{\text{odd}}'(\mathrm{red}(G)) \quad\text{ and }\quad\text{cov}_{\text{odd}}(G)\leq \text{cov}_{\text{odd}}(\mathrm{red}(G))
\end{equation}

Although both inequalities~\eqref{ineq} turn out to be equalities for every Shannon triangle $S$, there exist loopless graphs $G$ for which these are strict inequalities (c.f. Fig.~\ref{fig:Sereni} in regard to $\chi_{\text{odd}}'$ and Fig.~\ref{fig:counterexample} in regard to $\text{cov}_{\text{odd}}$). Note that the latter example refutes a conjecture proposed at the end of~\cite{petruvsevski2018odd}. The notions of `reduction' and `reduced graph' are relevant to us here because it can be easily argued that it suffices to prove Theorem~\ref{main} for reduced graphs.

For each vertex $v\in V(G)$, let $N_G(v)$ be the set of neighbors of $v$ in $G$. In a reduced graph $G$, $N_G'(v)$ denotes the set of neighbors of $v$ through a bouquet of size $1$, and $N_G''(v)$ denotes the set of neighbors of $v$ through a bouquet of size $2$.

\section{Some Special Cases}\label{special_cases}
Aiming to prove our two main theorems, in this section, we present  some preliminary results dealing with special cases. 
Note that some important partial results will already be obtained in this section. For example, 
Theorem~\ref{main} will be proved under the condition that the graph $G$ admits at least one cut vertex. 
We divide this section into two subsections for edge-colorings and edge-coverings, respectively.
Certain arguments are based on the various coloring and covering schemes provided in~\cite{petruvsevski2018odd} and~\cite{petruvsevski2019coverability}, respectively. 

\subsection{Odd-Edge-Colorings}

Let us first discuss graphs all of whose cycles (if any) share a common vertex. We begin with a particular one of multiplicity $2$ - the so-called `double join' of a path and a vertex $v$. Its special feature is the flexibility regarding odd $2$-edge-colorings that are odd away from $v$. This flexibility in regard to the parity of color degrees at the vertex $v$ will be important at some point in the proof of Theorem~\ref{main}. 

\begin{lemma}\label{two_ways}
    Let $H$ be a graph of order $v(H)\geq 4$, and let $v$ be an even vertex such that $N_H(v)=N_H''(v)=V(H-v)$. Moreover, let  $H-v$ be a simple path. Then we can obtain 
    two distinct $2$-edge-colorings $\varphi_1,\varphi_2:E(H)\to \{1,2\}$ with the following features.
    \begin{enumerate}
        \item The coloring $\varphi_1$ is an odd $2$-edge-coloring of $H$; 
        \item The coloring $\varphi_2$ is a 
 $2$-edge-coloring of $H$ which is odd away from $v$, 
 and both colors $1$ and $2$ are even at $v$.
       \end{enumerate}
\end{lemma}

\begin{proof}
Let $\{e_1,e_2\}$,$\{e_3,e_4\}$$,\ldots,$$\{e_{2k-1},e_{2k}\}$ denote the bouquets that are incident to $v$, and let $P_{k-1}=H-v$ be a path whose edge set is denoted by $\{f_1,f_2,\ldots,f_{k-1}\}$, such that $e_{2i-1},e_{2i},e_{2i+1},e_{2i+2},f_i$ form a Shannon triangle $S_{2,2,1}$ for every $i=1,\ldots,k-1$. 

We define the first coloring $\varphi_1$ as follows.
Assign the color $1$ to the following edges. 
\begin{align*}
      &    \{f_1\} ,    \{f_m \big| \, m \text{ is even} , 2 \leq m \leq k-1\},\\
      & \{e_1,e_2,e_3\} ,         \text{ and } \{e_{2m+1}, e_{2m+2} \big| \, m \text{ is even}, 2\leq m \leq k-1  \}. 
                     \end{align*} 
                     The remaining edges are  colored with $2$. 

We define the second coloring $\varphi_2$ as follows. Assign the color $1$ to the following edges. 
\begin{align*}
& \{f_m \big| \, m \text{ is odd}, 1\leq m\leq k-1\}, \\
& \{e_1,e_2\}, \, \text{ and } \{e_{2m+1}, e_{2m+2} \big| \, m \text{ is odd}, 1\leq m\leq k-1\}.
\end{align*}
The remaining edges are colored with $2$.     
\end{proof}

Turning to general graphs all of whose cycles (if any) share a common vertex, the following easy result follows from Lemma~\ref{forest} by splitting $v$ into leaves.

\begin{lemma}\label{feedback_blow_up} (Proposition 2.3 of ~\cite{petruvsevski2018odd})
Let $H$ be a graph with a vertex $v$ such that $H-v$ is a forest. Then $H$ admits a $2$-edge-coloring that is odd away from $v$. Additionally, if $d_H(v)$ is odd, the coloring is such that the color $1$ (respectively, $2$) is odd (respectively, even) at $v$. Contrarily, if $d_H(v)$ is even then either both color degrees at $v$ are even or both are odd.
\end{lemma}

\begin{lemma}\label{feedback}
	Let $H$ be a graph with a vertex $v$ such that $H-v$ is a forest. Assume $d_H(v)$ is even and an odd-sized bouquet $\mathcal{B}$ is incident with $v$. Then $\chi_{\text{odd}}'(H)\leq3$. Moreover, if   $\chi_{\text{odd}}'(H)=3$ then there exists an edge $e\notin E_H(v)$ such that $e$ is adjacent to $\mathcal{B}$ and $\chi_{\text{odd}}'(H\backslash\{e\})=2$.	
\end{lemma}

\begin{proof}
Split $v$ into $d_H(v)$ leaf vertices. The obtained graph $H'$ is a forest. Note that the bouquets of $H$ that are incident with $v$ become stars within $H'$; the edges of each such star are some of the new pendant edges of $H'$ and together comprise the even-sized set $E(H')\backslash E(H)$. We distinguish between two possibilities.

\smallskip
  
\noindent {\textbf {Case 1.}} \textit{A component of $H'$ has an odd-sized intersection with $E(H')\backslash E(H)$.} We show that then $\chi_{\text{odd}}'(H)=2$. In view of Lemma~\ref{forest}, take an odd $2$-edge-coloring of $H'$ with color set $\{1,2\}$. If this yields an odd $2$-edge-coloring of $H$, we are done (as $d_H(v)$ is even and positive). Contrarily, both color degrees at $v$ are even, that is, an even number of edges in $E(H')\backslash E(H)$ are colored by $1$ and the remaining even number of edges in $E(H')\backslash E(H)$ are colored by $2$. Returning to the considered odd $2$-edge-coloring of $H'$, permute the colors $1$ and $2$ in a component of $H'$ that has an odd-sized intersection with $E(H')\backslash E(H)$. Now this yields an odd $2$-edge-coloring of $H$. 

\smallskip

\noindent {\textbf {Case 2.}} \textit{Every component of $H'$ has an even-sized intersection with $E(H')\backslash E(H)$.} Consider the component of $H'$, a tree $T$, that contains the odd-sized star $\mathcal{B}'$ originating from $\mathcal{B}$. Let $w\in N_H(v)$ be the central vertex of $\mathcal{B}'$, and let $v_1,v_2,\ldots,v_{2s-1}$ be the leaves of $\mathcal{B}'$ (arising from the splitting of $v$). Say $v_{2s},v_{2s+1},\ldots,v_{2t}$ are the rest of the vertices within $T$ that have also arisen from the splitting of $v$. Note that $v_{2s},v_{2s+1},\ldots,v_{2t}\notin N_T(w)$. Consider the non-empty set $E_T(w)\backslash\mathcal{B}'$. For any edge $e\in E_T(w)\backslash\mathcal{B}'$, the forest $T-e$ has two components, one of which fully contains $\mathcal{B}'$. Let $V_e$ be the intersection between $\{v_{2s},v_{2s+1},\ldots,v_{2t}\}$ and the component of $T\backslash e$ that does not contain $\mathcal{B}'$. Clearly, $\{V_e : e\in E_T(w)\backslash\mathcal{B}'\}$ is a partition of $\{v_{2s},v_{2s+1},\ldots,v_{2t}\}$. Since the set $\{v_{2s},v_{2s+1},\ldots,v_{2t}\}$ is odd-sized, there exists an edge $e\in E_T(w)\backslash\mathcal{B}'$ such that $V_e$ is odd-sized as well. Thus, $T\backslash e$ consists of two components each of which contains an odd-sized portion of $\{v_1,v_2,\ldots,v_{2t}\}$. Therefore, a component of  $H'\backslash\{e\}$ has an odd-sized intersection with $E(H'\backslash\{e\})\backslash E(H\backslash\{e\})$. By Case 1, $\chi_{\text{odd}}'(H\backslash\{e\})=2$. Finally, observe that $e\notin E_H(v)$ and $e$ is adjacent to $\mathcal{B}$. 
\end{proof}

\begin{coro}\label{even_vertex_odd-sized_bouquet}
Let $G$ be a connected graph with an internal even vertex $v$ that is incident to an odd-sized bouquet. Then $G$ is either odd $3$-edge-colorable or admits an odd $4$-edge-coloring with color set $\{1,2,3,4\}$ such that the color $4$  is used only on some edge $e$ whose endvertices are incident with a common color $c\in\{1,2,3\}$.
\end{coro}

\begin{proof}
In view of the second part of Theorem~\ref{Pyber_4}, we may assume that $v(G)$ is odd. Thus $G-v$ is a connected graph of even order. Take a spanning odd coforest $K$ of $G-v$, and color $E(K)$ by $1$. Consider the graph $H=G\backslash E(K)$. Since Lemma~\ref{feedback} applies to $H$, take an odd-$3$-edge-coloring of $H$ with color set $\{2,3,4\}$ such that the color class $E_4\subseteq\{e\}$, where $e$ is an edge as described in Lemma~\ref{feedback}. Noting that $e\in E(G-v)$, the color $1$ appears at both endvertices of $e$. 
\end{proof}

Next, we consider reduced graphs with some particular vertices. The following two lemmas are based on Corollary 3.3 and Lemma 3.4 in~\cite{petruvsevski2018odd}, respectively.

\begin{lemma}\label{cor3.3}
Let $G$ be a reduced connected graph, and let $v$ and $w$ be odd vertices such that $w\in N_G''(v)$. If $v$ is an internal vertex of $G$ and $w$ is an internal vertex of $G-v$, then there exists an edge $f$ such that $G \backslash \{f\}$ admits an odd $3$-edge-coloring with a common color appearing at both endvertices of $f$.
\end{lemma}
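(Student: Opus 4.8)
\emph{Plan.} I will take $f$ to be one of the two edges joining $v$ and $w$. Since $G$ is reduced and $w\in N_G''(v)$, the bouquet $\mathcal{B}_{vw}$ consists of exactly two edges $e_1,e_2$; put $G':=G\setminus\{e_1\}$. The following facts are immediate: $G'$ is connected and reduced, $d_{G'}(v)$ and $d_{G'}(w)$ are even, $G'-v=G-v$ so that $v$ is internal in $G'$ and $w$ is internal in $G'-v$. If $|V(G)|$ is even, then $G'$ is a connected graph of even order, so $\chi'_{\text{odd}}(G')\leq3$ by Theorem~\ref{Pyber_4} and we are done with $f=e_1$. So assume from now on that $|V(G)|$ is odd; this is the substantive case, and here $G-v$ is connected of \emph{even} order.

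The strategy for odd $|V(G)|$ is: fix a well-chosen odd $3$-edge-coloring $\psi$ of $G-v$, transport its three colour classes to $G$, and then extend across $v$ by colouring the edges of $E_G(v)$, deleting at most one of them. I build $\psi$ from a forest $T$-join $J$ of $G-v$ with $T$ the set of even vertices of $G-v$: by Lemma~\ref{forest_coforest} we may take $J$ to be a forest, colour it with colours $1,2$ via Lemma~\ref{forest}, and give colour $3$ to $(G-v)\setminus E(J)$, which has all degrees odd. Crucially, because $w$ is internal in $G-v$ the graph $(G-v)-w$ is connected and still contains all of $T$, so we may choose $J$ to avoid $w$ entirely; then $w$ receives \emph{only} colour $3$ in $\psi$. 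Now we extend over $v$. A neighbour $u$ of $v$ joined by a bouquet of size $2$ is harmless: colour its two edges either with a colour already present at $u$, or, if $u$ has no colour yet, with two distinct colours; in particular the pair $e_1,e_2$ may be coloured $1,2$ (both absent at $w$) or $3,3$. A neighbour $u$ joined to $v$ by a single edge is harmless unless all three colours already appear at $u$ in $\psi$; since colour $3$ appears at every vertex of $G-v$ while colours $1,2$ appear together at $u$ only when $d_J(u)>0$, such a $u$ must have $d_{G-v}(u)$ odd, i.e.\ $d_G(u)$ even.

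It remains to control the set $X$ of these obstructing neighbours. We have $X\subseteq X_0:=\{u\in N_G'(v):d_G(u)\text{ is even}\}$, a fixed subset of $V(G-v)\setminus T$, and $u\in X$ exactly when $J$ touches $u$. Using the remaining freedom in the choice of the forest $T$-join together with Lemma~\ref{coforest} (applicable since $w$ is internal in $G-v$) and a cut/parity analysis inside $G-v$, one arranges that $J$ — while keeping $(G-v)\setminus E(J)$ odd and avoiding $w$ — meets at most one vertex of $X_0$, so $|X|\leq1$. Granting this: if $X=\emptyset$ we colour all of $E_G(v)$ compatibly (the local bookkeeping at $v$ works because $w$ carries only one colour, so the pair $e_1,e_2$ can absorb whatever parity $v$ needs), obtaining $\chi'_{\text{odd}}(G)\leq3$; if $X=\{u\}$ we set $f:=vu$, delete it, colour $E_G(v)\setminus\{f\}$ the same way, and obtain $\chi'_{\text{odd}}(G\setminus\{f\})\leq3$. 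If instead $\chi'_{\text{odd}}(G)\leq3$ holds outright, the conclusion follows from an easy separate argument, so we may in any case assume $\chi'_{\text{odd}}(G)=4$.

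The main obstacle is the control step of the previous paragraph: guaranteeing that some odd $3$-edge-colouring of $G-v$ leaves at most one single-edge neighbour of $v$ carrying all three colours. This is precisely where the hypotheses enter — $v$ internal makes $G-v$ connected (so the $T$-join machinery and Lemma~\ref{coforest} are available), and $w$ internal in $G-v$ lets us steer the $T$-join away from $w$ and, via Lemma~\ref{coforest}, away from the bad set $X_0$; the delicate case is when $X_0$ separates $T$ inside $G-v$, which forces a more careful routing of $J$ (and is presumably where the explicit colouring scheme of~\cite{petruvsevski2018odd} is invoked). Everything else — the even-order reduction, the passage to reduced graphs, and the local bookkeeping when extending across $v$ — is routine.
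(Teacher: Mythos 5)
Your argument has a genuine gap at exactly the point you flag yourself: the ``control step'' asserting that the forest $T$-join $J$ in $G-v$ can be chosen (while avoiding $w$ and keeping its complement odd) so as to meet at most one vertex of $X_0=\{u\in N_G'(v): d_G(u)\text{ even}\}$. This is not a routine refinement of Lemma~\ref{forest_coforest} or Lemma~\ref{coforest}: those lemmas give you control at a \emph{single} prescribed internal vertex, whereas you need simultaneous avoidance of an arbitrary set of vertices, and when $X_0$ separates the terminals of $T$ inside $G-v$ every $T$-join must pass through several of them. Writing ``one arranges that\dots Granting this'' and ``presumably where the explicit colouring scheme of~\cite{petruvsevski2018odd} is invoked'' concedes that the core of the lemma is unproved; as it stands the proposal reduces the statement to a harder-looking routing problem rather than solving it. A secondary gap is the extension across $v$: with $d_G(v)$ odd you must end with every colour class odd (or absent) at $v$, and your only stated flexibility at the bouquet $\mathcal{B}_{vw}$ is ``colour both edges $3$'' or ``colour them $1,2$'' (since $w$ carries only colour $3$), which changes the parities at $v$ of colours $1$ and $2$ together or not at all; it cannot, for instance, repair a bad parity in colour $3$ alone, so ``the pair $e_1,e_2$ can absorb whatever parity $v$ needs'' is not justified.

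For comparison, the paper's proof avoids both difficulties by not colouring $G-v$ independently of $v$. It applies Lemma~\ref{coforest} in $G-v$ with $w$ as the internal vertex to get a spanning odd coforest $H$ containing \emph{all} of $E_{G-v}(w)$ (so $d_{\hat H}(w)=0$), gives $E(H)$ colour $1$, and observes that $G\setminus E(H)$ has a $\min$ FVS of size $1$ rooted at $v$ (the double edge $vw$ supplies a cycle, $\hat H$ is a forest). Since $d_G(v)$ is odd, Lemma~\ref{feedback_blow_up} $2$-colours $G\setminus E(H)$ with colours $2,3$, odd away from $v$, colour $2$ odd and colour $3$ even at $v$; at $w$ only the two parallel edges remain, so they receive colours $2$ and $3$, and deleting the colour-$3$ edge $f\in\mathcal{B}_{vw}$ simultaneously fixes the parity of colour $3$ at $v$. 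The edge removed is thus one of the parallel edges $vw$ itself, and no global control over the neighbours of $v$ is ever needed. If you want to salvage your approach, you would have to either prove the $|X|\le 1$ routing claim or, more simply, adopt the coforest-plus-feedback structure above.
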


\begin{proof}
We prove that any edge from $\mathcal{B}_{vw}$ works. In view of the second part of Theorem~\ref{Pyber_4}, we may assume that $v(G)$ is odd.
Write $\mathcal{B}_{vw}=\{e,f\}$. Since $w$ is an internal vertex of $G-v$, 
by Lemma~\ref{coforest}, there is a spanning odd coforest $K$ in $G-v$ such that $d_{\hat{K}}(w)=0$, where $\hat{K} = (G-v)\backslash E(K)$. Assign color $1$ to $E(K)$. 
Note that $d_G(v)$ is odd, and the graph $H=G\backslash E(K)$ meets all requirements of
Lemma~\ref{feedback_blow_up}. Thus $H$ admits a $2$-edge-coloring with color set $\{2,3\}$,
 which is odd away from $v$ and at $v$
the color $2$ (respectively, $3$) is odd (respectively, even). 
Since $E_{H}(w)=\mathcal{B}_{vw}$, we can assume that $e$ is colored by $2$ and $f$ by $3$.
This induces a coloring of the graph $G\backslash \{f\}$, which is an odd $3$-edge-coloring, and color $2$ appears at both endvertices of $f$.  
\end{proof}

\begin{lemma}\label{lemma3.4}
    Let $G$ be a reduced connected graph such that every even vertex $u$ satisfies $N_G'(u)=0$. Assume there are
   three vertices $v,w,q\in V(G)$ that satisfy the following.  
    \begin{enumerate}
        \item $v$ is an even internal vertex of $G$;
        \item $q$ is an internal vertex of $G-v$;
        \item $v$ and $q$ are nonadjacent, and $w$ is adjacent to both $v$ and $q$.
    \end{enumerate}
    Then either $G$ admits an odd $3$-edge-coloring, 
    or there is an edge $e$ such that $G\backslash \{e\}$ admits an odd $3$-edge-coloring with a common color appearing at both endvertices of $e$. 
\end{lemma}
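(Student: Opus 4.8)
The plan is to mimic the structure of Lemma~\ref{cor3.3}, but now exploit the fact that $w$ lies on a path between the nonadjacent vertices $v$ and $q$, using the bouquet structure at $w$ together with the degree conditions to free up an edge. First I would set up notation: let $\mathcal{B}_{vw}$ and $\mathcal{B}_{wq}$ be the bouquets joining $w$ to $v$ and to $q$ respectively. Since every even vertex $u$ has $N_G'(u)=0$, and $v$ is an even internal vertex, the bouquet $\mathcal{B}_{vw}$ has size $2$; write $\mathcal{B}_{vw}=\{e_1,e_2\}$. I expect to consider cases according to the parity of $d_G(w)$ and of $d_G(q)$, and whether $q$ is even (forcing $N_G'(q)=0$, so $\mathcal{B}_{wq}$ also has size $2$) or odd.

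The core construction should go as follows. Since $q$ is an internal vertex of $G-v$, apply Lemma~\ref{coforest} to $G-v$ with an appropriate target set $T$ so as to obtain a spanning odd coforest $H$ of $G-v$ with $d_{\hat H}(q)$ controlled (either $0$ or $1$, with a specified incident edge in case (b) lying in $\mathcal{B}_{wq}$ when that bouquet has size $2$). Assign color $1$ to $E(H)$. Now $G\backslash E(H)$ has a $\min$ FVS of size $1$ rooted at $v$, because $H$ spans $G-v$ and removing $E(H)$ can only leave acyclic structure on $G-v$ together with the edges at $v$. Since $d_G(v)$ is even, Lemma~\ref{feedback_blow_up} gives a $2$-edge-coloring of $G\backslash E(H)$ with colors $2$ and $3$, odd away from $v$, and at $v$ either both color-degrees are even or both are odd. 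If both are odd we are done: $G$ is odd $3$-edge-colorable. If both are even, I would use the extra bouquet edge $e_2\in \mathcal{B}_{vw}$ as the edge $e$ to delete: deleting $e_2$ changes the parity of exactly one of the two colors at $v$ (whichever $e_2$ received), and since $e_2$'s color at $v$ goes from even to odd we break the balance — but we also must restore oddness at $w$. This is where the interplay with $q$ and the coforest choice matters: I would arrange, by choosing $T$ and the feedback coloring suitably, that the single deleted edge $e_2$, once removed, leaves colors $2$ and $3$ both odd at $v$ and leaves all parities at $w$ correct, possibly by recoloring along the path through $q$ guaranteed by the hypothesis, or by having pre-selected which of $e_1,e_2$ receives which color.

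More carefully, the role of $q$ being nonadjacent to $v$ but adjacent to $w$ is to let us transfer a parity defect from $w$ along the edge(s) $\mathcal{B}_{wq}$ into the $G-v$ part, where the coforest coloring has a free vertex (namely $q$) whose parity we control via Lemma~\ref{coforest}(b) and its ``specific edge'' clause. So the argument is: pick the $T$-join/coforest $H$ in $G-v$ so that $q$ is the unique place where $\hat H$ fails the desired parity and a designated edge $f_0\in \mathcal{B}_{wq}$ lies in $E(\hat H)$; color $E(H)$ with $1$; color $G\backslash E(H)$ with $2,3$ via Lemma~\ref{feedback_blow_up}; if $v$ is already good, done; otherwise delete the spare edge $e_2\in\mathcal{B}_{vw}$ and, if needed, swap colors on $f_0$ to fix the parity at $w$ and at $q$ simultaneously, checking that this local swap does not disturb anything else because $f_0$ is a leaf-type edge in the relevant coforest.

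The main obstacle I anticipate is the bookkeeping of parities at the three special vertices $v,w,q$ simultaneously after deleting a single edge: we have only one ``degree of freedom'' (the spare bouquet edge $e_2$, or equivalently one color swap along $\mathcal{B}_{wq}$), and we must ensure that using it corrects the deficit at $v$ without creating a new deficit at $w$ or $q$. Handling the sub-case where $q$ is an \emph{even} vertex (so $\mathcal{B}_{wq}$ also has size $2$, giving a second spare edge) versus $q$ \emph{odd} will likely require separate treatment, and the genuinely delicate point is verifying that the path through $q$ promised by the nonadjacency/adjacency hypothesis can be used to reroute parity without introducing a cycle that wrecks the $\min$ FVS structure needed for Lemma~\ref{feedback_blow_up}. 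I would also need to double-check the degenerate situations where $G-v$ or a relevant lobe is small, but those should reduce to earlier lemmas such as Lemma~\ref{cor3.3} or Lemma~\ref{cut_vertex1}.
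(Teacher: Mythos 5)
Your overall toolkit is the right one (spanning odd coforest in $G-v$ with the leftover edge at $q$ forced into $\mathcal{B}_{wq}$, then Lemma~\ref{feedback_blow_up} at the root $v$, then using the bouquet edges as spare parity), but the repair mechanism at the heart of your plan does not close. When you apply Lemma~\ref{feedback_blow_up} to $G\backslash E(H)$ with $d(v)$ even, you have no control over which of the two outcomes occurs, and in the ``both colors even at $v$'' outcome your fix is arithmetically impossible: deleting the single edge $e_2\in\mathcal{B}_{vw}$ changes the parity of exactly one color class at $v$, so the other class stays even (and in general positive) at $v$ --- you cannot ``leave colors $2$ and $3$ both odd at $v$'' by removing one edge. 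The secondary move is equally stuck: swapping the color of one edge $f_0\in\mathcal{B}_{wq}$ flips the parity of \emph{both} color classes at \emph{both} endpoints $w$ and $q$, so it can only relocate a single-color defect (from color $2$ at $w$ to color $3$ at $w$, say, while simultaneously creating defects at $q$), never cancel it. With one deletion and one swap you cannot simultaneously repair $v$, $w$ and $q$, which is exactly the bookkeeping problem you flag but do not resolve.

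The idea you are missing is to reserve the spare edges \emph{before} invoking the feedback lemma rather than after. The paper writes $\mathcal{B}_{vw}=\{f_1,f_2\}$, picks $e\in\mathcal{B}_{wq}$ with $E_{\hat H}(q)=\{e\}$ via Lemma~\ref{coforest}, and applies Lemma~\ref{feedback_blow_up} to $G\backslash(E(H)\cup\{f_1,e\})$: since $f_1$ has been removed, $v$ now has \emph{odd} degree there, so the lemma gives the definite conclusion ``color $1$ odd, color $2$ even at $v$''. Reinserting $e$ and $f_1$ both with color $2$ then raises the color-$2$ degree by $2$ at $w$ (parity preserved), by $1$ at $v$ and by $1$ at $q$ (both fixed), provided color $2$ already appears at $w$ --- which is argued from the parity of $d_G(w)$, and in the residual-odd-degree case (both $d_G(w)$ and $d_G(q)$ even) by using Lemma~\ref{forest} to force $w$ to carry a single color, after which one either keeps $e$ or deletes it. The remaining parities of $d_G(w),d_G(q)$ are handled by first deleting $e'\in\mathcal{B}_{wq}$ (swapping the roles of $v$ and $q$) or $e'\in E_G(q)\backslash E_G(w)$ to reduce to the first case; your sketch does not supply these reductions either. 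Without the ``remove $f_1$ and $e$ first'' device (or an equivalent two-edge reserve whose reinsertion is parity-neutral at $w$), the single-degree-of-freedom repair you describe cannot work, so as written the proof has a genuine gap.
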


\begin{proof}
   We may assume that $v(G)$ is odd, otherwise $G$ is odd $3$-edge-colorable by the second part of Theorem~\ref{Pyber_4}. Let us distinguish between four cases based on the parities of $d_G(w)$ and $d_G(q)$.  We write 
   $\mathcal{B}_{vw}=\{f_1,f_2\}$ and specify one edge $e\in \mathcal{B}_{wq}$.

 \smallskip  
   
   \noindent {\textbf{Case 1.}} \textit{$d_G(w)$ is odd and $d_G(q)$  is even.} We show that then $\chi_{\text{odd}}'(G)\leq3$. 
   Since $q$ is an even internal vertex in $G-v$, according to Lemma~\ref{coforest}, there is a spanning odd coforest $H$ in $G-v$, such that $E_{\hat{H}}(q)=\{e\}$, where $\hat{H} = (G-v)\backslash E(H)$. 
   Note that $d_{\hat{H}}(w)$ is even, and $d_{\hat{H}}(w) \geq 2$ as $e$ is present. 
 By Lemma~\ref{feedback_blow_up}, $G\backslash (E(H)\cup \{f_1,e\})$ has a 2-edge-coloring $\phi$ with color set $\{1,2\}$ such that $\phi$ is odd away from $v$, and at $v$
 the color $1$ is odd whereas the color $2$ is even. Since the degree of $w$ in $G\backslash (E(H)\cup \{f_1,e\})$ is even and positive, it has received both colors $1$ and 
 $2$. Then we assign $\{e,f_1\}$ with the color 2 and $E(H)$ with the color 3.
 This procedure yields an odd 3-edge-coloring of $G$. 

\smallskip
 
  \noindent {\textbf{Case 2.}} \textit{$d_G(w)$ is even and $d_G(q)$  is odd.} Say $\mathcal{B}_{wq}=\{e,e'\}$.
  Note the graph $G\backslash \{e'\}$ has a similar property with Case $1$. 
  Therefore, a similar argument gives an odd $3$-edge-coloring of the graph $G\backslash \{e'\}$. The color assigned to the edge $e$ clearly appears at both endvertices of $e'$.

\smallskip  
  
\noindent {\textbf{Case 3.}} \textit{$d_G(w)$ is odd and $d_G(q)$ is odd.}  Choose $e'\in  E_G(q)\backslash E_G(w)$. 
Then $G\backslash \{e'\}$ admits an odd $3$-edge-coloring as it satisfies the requirements of Case 1. Note that the color $3$ appears at both endvertices of $e'$. 

\smallskip

   \noindent {\textbf{Case 4.}} \textit{$d_G(w)$ is even and $d_G(q)$ is even. }
   We again write $\mathcal{B}_{wq}=\{e,e'\}$. There exists a spanning odd coforest $H$ of $G-v$ satisfying $E_{\hat{H}}(q)=e$. Similar with Case 1, by Lemma~\ref{feedback_blow_up}, 
    we can give a 2-edge-coloring $\phi$ with color set $\{1,2\}$ to $G'=G\backslash (E(H)\cup \{e,f_1\})$ such that $\phi$ is odd away from $v$, and at $v$ the color $1$ is odd whereas the color $2$ is even. 
   Note that $d_{G'}(w)$ is odd, which implies that 
   $w$ receives only one color under $\phi$. 
   If $w$ receives the color $1$, then we assign the color $2$ to $f_1$, the color $3$ to $E(H)$ and delete the edge $e$, which completes an odd $3$-edge-coloring of $G\backslash\{e\}$; the color assigned to $e'$ appears at both endvertices of $e$. 
   If $w$ receives the color $2$, then we assign the color $2$ to both $f_1$ and $e$. It can be checked that this is an odd $3$-edge-coloring for $G$. \end{proof}

We would like to address
another very special case where $G$ is a `doubled' complete graph, that is, every two vertices of $G$ are adjacent via a bouquet of size $2$. 

\begin{lemma}\label{doubleK_t}
Let $G$ be the graph with order $v(G)=2\ell+1$, $\ell\geq 2$, such that for any two vertices $v,w\in V(G)$ there is a bouquet $\mathcal B_{vw}$ of size $2$.  Then, for any edge $e\in E(G)$, the graph $G\backslash \{e\}$ is odd $3$-edge-colorable. Obviously,  under any such edge-coloring, a common color appears at both endvertices of $e$.
\end{lemma}
\begin{proof} Let us
specify two vertices $x$ and $y$ in $V(G)$, and let $H= G- \{x,y\}$, which is a copy of a `doubled' $K_{2\ell-1}$.
We split $E(H)$ into a union of three edge sets (regarded as subgraphs):
 $H_1$, $H_2$, and an edge $e \in \mathcal B_{v_1v_2}$, where $v_1$ and $v_2$ are two fixed vertices in $H$.  
We require that the subgraph $H_1$ (respectively, $H_2$) 
has even degree at the vertex $v_1$ (respectively, $v_2$), 
and odd degree at all other vertices in $V(H)$. 
For example, we can take $H_1$ to be the union of a star $S_2$ centered at $v_1$ with two leaves $v_2$ and $v_3$, where $v_3$ is any other vertex of $H$, 
together with a perfect matching in $H-\{v_1,v_2,v_3\}$. 
Then $H_2 =H \backslash (E(H_1) \cup \{e\})$. 
We then define the graph $H_1'$ to be the union of $H_1$ 
and all the bouquets $\mathcal B_{xw}$ for $w\in V(H)-v_1$ and one of the edges from the bouquet $\mathcal B_{xv_1}$. We can observe that $H_1'$ is an odd graph. 
Similarly, we define $H_2'$ to be  
 a union of $H_2$ and all the bouquets $\mathcal B_{yw}$ for $w\in V(H)-\{v_2\}$, along with one of the edges from the bouquet $\mathcal B_{yv_2}$. 
 The remaining graph $H_3=G\backslash (E(H_1')\cup E(H_2')\cup\{e\})$ is the union of $\mathcal B_{xy}$ and one edge from $\mathcal B_{xv_1}$ and one edge from $\mathcal B_{yv_2}$, which is an odd subgraph. 
 We assign  
 color $1$ to $H_1'$, 
 color $2$ to $H_2'$, and color $3$ to $H_3$.
  This is an odd $3$-edge-coloring of $G\backslash \{e\}$. 
\end{proof}


\subsection{Odd-Edge-Coverings} 

\begin{lemma}\label{cover_odd_component}
    Let $G$ be a simple connected graph of odd order. 
    If there exists a cut vertex $v$ of $G$ such that $G-v$ has a connected component of odd order, then $\min \text{ovl}(G)\leq 2$. 
\end{lemma}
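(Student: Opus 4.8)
The plan is to break $G$ at the cut vertex $v$ into two pieces of even order, colour each of them separately using the even‑order case of Pyber's theorem, and then reconcile the two colourings at $v$. Let $C_1$ be a component of $G-v$ of odd order, set $G_1=G[V(C_1)\cup\{v\}]$ and $G_2=G-(C_1-v)=G[V(G)\setminus V(C_1)]$. Both are connected, they meet only in $v$ (with no common edges), and since $|V(G)|$ and $|V(C_1)|$ are odd, $|V(G_1)|=|V(C_1)|+1$ and $|V(G_2)|=|V(G)|-|V(C_1)|$ are both even. By the even‑order part of Theorem~\ref{Pyber_4} we may fix an odd $3$‑edge‑colouring $\varphi_2$ of $G_2$; let $B\subseteq\{1,2,3\}$ be the set of colours appearing at $v$ and $b_i=d_{E_i}(v)$ (so each $b_i$ is $0$ or odd). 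Since $G_1-v=C_1$ is connected, $v$ is an internal vertex of $G_1$, so Lemma~\ref{coforest} applies to $G_1$ with $T=V(G_1)$: it produces an odd spanning coforest $H_1$ of $G_1$ for which $\hat H_1=G_1\setminus E(H_1)$ is a forest and $d_{\hat H_1}(v)$ equals $0$ if $p:=d_{G_1}(v)$ is odd and $1$ if $p$ is even. Colouring $H_1$ by one colour and $\hat H_1$ by the other two via Lemma~\ref{forest} gives odd $3$‑edge‑colourings of $G_1$ in which all $p$ edges at $v$ get a single (freely chosen) colour when $p$ is odd, and in which, when $p$ is even, one colour is used $p-1$ times at $v$ and a second colour once; in every case the colour labels can be permuted at will. (This is parallel to Case~1 of Lemma~\ref{cut_vertex1}.)

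If $|B|\le 1$, or if $|B|=2$ and $p$ is odd, then the set of colours appearing at $v$ in $G_1$ can be made disjoint from $B$ by the above freedom. Gluing the two colourings then yields an odd $3$‑edge‑colouring of $G$: away from $v$ each colour class is odd because the two colourings are, and at $v$ the degree of each colour is $a_i+b_i$ with one summand $0$, hence $0$ or odd. So $\min \text{ovl}(G)=0$ in these cases, and we are done.

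The remaining cases are $|B|=3$, and $|B|=2$ with $p$ even. Here one checks that $G_1$ has no $3$‑edge‑colouring that is odd at every $w\neq v$ and whose colour‑degrees $a_i$ at $v$ obey $a_i\equiv b_i+1\pmod 2$ or $a_i=b_i=0$ for all $i$ — the obstruction being, ultimately, that $|V(C_1)|$ is odd so that $C_1$ carries no join of the needed parity — so we must allow one edge to receive two colours. I would keep $H_1$ coloured by the colour $c$ that is absent from $B$ when $|B|=2$ (and argue symmetrically for $|B|=3$), colour $\hat H_1$ by the remaining two colours, and repair the unique parity defect at $v$ by giving exactly one edge of $H_1$ incident with $v$ a second colour. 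Because $G$ is simple, the $p-1$ edges of $H_1$ at $v$ reach $p-1$ distinct vertices, and none of them is the far endpoint $w^\ast$ of the single edge of $\hat H_1$ at $v$; choosing that edge — and, if necessary, using the freedom in Lemma~\ref{coforest}(b) to take $w^\ast$ of odd degree in $G_1$ together with a small local change of the forest $2$‑colouring — one can arrange that no vertex other than $v$ acquires an even positive colour‑degree. Gluing this covering of $G_1$ with $\varphi_2$ produces an odd $3$‑edge‑covering of $G$ in which exactly one edge carries two colours, so $\text{ovl}=2$ and hence $\min \text{ovl}(G)\le 2$.

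The genuinely delicate step is this last repair: absorbing the parity mismatch at $v$ with a single two‑coloured edge without creating a new defect elsewhere forces a coherent choice of the forest edge at $v$, of the labelling of $H_1$ and $\hat H_1$, and of the repaired edge. A related nuisance is that when $v$ is a cut vertex of $G_2$ the value $|B|=3$ cannot simply be avoided by recolouring $G_2$; this can be handled by first selecting $C_1$ with $d_{G_1}(v)$ odd whenever such a choice exists, and otherwise by an induction on the number of $v$‑lobes, reducing in each step to a strictly smaller instance.
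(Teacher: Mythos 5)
Your skeleton is the same as the paper's (split $G$ at the cut vertex $v$ into the odd-order lobe $G_1$ and the rest, colour each side via an odd spanning coforest plus a forest, and accept one doubly coloured edge at $v$ when the parities clash), but the two steps you yourself flag as delicate are genuine gaps, not just loose ends. First, the repair in the case ``$|B|=2$, $p$ even'' is not established and can fail as described: to add the defective colour $c$ to an edge $vu\in E(H_1)$ without damage you need an $H_1$-neighbour $u$ of $v$ whose colour-$c$ degree in $\hat H_1$ is zero; but whenever $d_{\hat H_1}(u)$ is even and positive (i.e.\ $d_{G_1}(u)$ odd and $u$ not isolated in $\hat H_1$), \emph{every} odd $2$-colouring of the forest gives $u$ odd degree in both colours, so adding either colour to $vu$ creates an even positive colour degree at $u$. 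Nothing in your sketch excludes the configuration in which all $H_1$-neighbours of $v$ are of this kind, and the one-vertex prescription permitted by Lemma~\ref{forest} cannot simultaneously control the colour of the single forest edge $vw^\ast$ at $v$ and the palette at a chosen $u$. Second, when $|B|=3$ and $p$ is even, two colours become even and positive at $v$, and a single edge carrying two colours changes exactly one colour count at $v$, so it cannot repair both defects; your fallback (choose another odd component with odd attachment degree, else ``induction on the number of $v$-lobes'') is not formulated as a statement and is not obviously available, since both odd-order components may attach to $v$ with even degree. (This second problem is self-inflicted: colouring $G_2$ by the same coforest device of Lemma~\ref{coforest} instead of a black-box application of Theorem~\ref{Pyber_4} forces at most two colours at $v$ in $G_2$; but the first gap would remain.)

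The missing idea is the paper's way of producing the doubly coloured edge without ever having to ``repair'' anything. The paper takes odd spanning coforests $F_1,F_2$ of the two sides of \emph{maximum size}, so that for an edge $vu\in E(F_1)$ the graph $T_1\cup T_2\cup\{vu\}$ is still a forest (a cycle would contradict maximality of $F_1$); since $v$ has odd degree in this enlarged forest, Lemma~\ref{forest} yields an odd $2$-colouring of it in which \emph{all} edges at $v$ get one colour, and the other forest colour is then renamed separately on the two sides (merging it with $F_2$ on the $C$-side and with $F_1$ on the $D$-side, which stays odd because those renamed edges avoid $v$). The edge $vu$ is the unique edge with two colours, and no parity defect can arise anywhere because the forest colouring is odd at every vertex globally. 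Without this (or some equally concrete) mechanism, your final two cases are not proved.
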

\begin{proof}
  Let $C$ be a connected component of $G-v$ with $v(C)$ odd, and let $D=G-(V(C)\cup \{v\})$. 
 By Lemma~\ref{forest_coforest}, there are odd spanning coforests $F_1$ and $F_2$ in $G[V(C)\cup \{v\}]$ and $G[V(D)\cup \{v\}]$ respectively. 
 We also take them to have maximum sizes among all such odd spanning coforests.
Next, we obtain $T_1=G[V(C)\cup \{v\}]\backslash E(F_1)$ and $T_2=G[V(D)\cup \{v\}]\backslash E(F_2)$. 
  We assign colors $1$ and $2$ to the edges of $F_1$ and $F_2$, respectively.

  \smallskip
      
\noindent {\bf Case 1.} \textit{$d_{T_1\cup T_2}(v)\leq 1$.}
We assume that $d_{T_1}(v)\leq 1$ and $d_{T_2}(v)=0$; the reasoning is analogous if $d_{T_1}(v)= 0$ and $d_{T_2}(v)\leq 1$. Then, we proceed to provide an odd $2$-edge-coloring for 
$T_1$ with color set $\{2,3\}$, ensuring that $E_{T_1}(v)$ does not receive color 2. Subsequently, we give an odd $2$-edge-coloring for $T_2$ with color set  $\{1,3\}$. As a result, $\chi'_{\text{odd}}(G)=3$, that is, $\min \text{ovl}(G)=0$. 

\smallskip

\noindent {\bf Case 2.} \textit{$d_{T_1\cup T_2}(v)\geq 2$.}
If $d_G(v)$ is odd, then $d_{T_1\cup T_2}(v)$ is also odd. Consequently, we can derive an odd $3$-edge-coloring for $T_1\cup T_2$, consistent with the coloring of $F_1$ and $F_2$ provided earlier. Initially, we ensure that all edges incident to $v$ are assigned color $3$. Subsequently, for the forest $T_1-v$, we utilize Lemma~\ref{forest} to apply an odd $2$-edge-coloring with color set $\{2,3\}$.
Similarly, for the forest $T_2-v$, we obtain an odd $2$-edge-coloring with color set $\{1,3\}$. This completes an odd $3$-edge-coloring for the entire graph $G$, proving that $\min \text{ovl}(G)=0$. 

If $d_G(v)$ is even, we define an odd $3$-edge-covering $\varphi$ 
such that $\text{ovl}(\varphi)=2$. Let $u\in F_1$ be a vertex adjacent to $v$.
We assert the following.

\smallskip

\noindent{\bf Claim.} \textit{$T_1\cup T_2 \cup \{uv\}$ contains no cycle.}
Indeed, if $T_1\cup T_2\cup\{uv\}$ contains a cycle, the cycle must be contained in the $v$-lobe $G[V(C)\cup \{v\}]$.
Thus we can assume it has 
the form $vuw_1\cdots w_t$.
Then, we can update $F_1$ by removing the edge $vu$ and adding the path $w_1\cdots w_t$,
to obtain an odd coforest in $G[V(C)\cup \{v\}]$ which has larger size. This contradicts the maximum choice of $F_1$.  \hfill$\diamond$

\smallskip

Now, by Lemma~\ref{forest}, there exists an odd $2$-edge-coloring of 
$T_1\cup T_2\cup \{uv\}$ with  color set $\{3,4\}$, 
such that all the edges incident to $v$ are colored with $3$ since $d_{T_1\cup T_2\cup \{uv\}}(v)$ is odd. Subsequently, we replace all edges colored with $4$ in $C$ (respectively, $D$) with color $2$ (respectively, color $1$). This gives 
an odd $3$-edge-covering of $G$, where $uv$ is the only edge that receives two colors, namely, 
colors $1$ and $3$. Consequently, $\min \text{ovl}(G)\leq 2$. 
\end{proof}

Below we rephrase Claims 2 and 3 from the proof of Theorem 1 in~\cite{petruvsevski2019coverability}, only for simple graphs in our context. 
\begin{lemma}\label{claim23}
Let $G$ be a simple connected graph of odd order. Let $v$ be an even internal vertex of $G$ and $w\in N_G(v)$. We have the following:
\begin{enumerate} 
    \item[(a)] 
If $d_G(w)$ is even and $w$
is an internal vertex of $G-v$, then $G$ admits 
an odd $3$-edge-coloring, which implies $\min \text{ovl}(G)=0$.
    \item[(b)] 
If $d_G(w)$ is odd, and there exists
an internal vertex $u$ of $G-v$, such that $u\in N_{G-v}(w)$ and $u\notin N_G(v)$, then there is an odd $3$-edge-covering  $\varphi$ of $G$, 
with $\text{ovl}(\varphi)\leq 2$.
\end{enumerate}
\end{lemma}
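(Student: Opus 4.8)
\textbf{Proof proposal for Lemma~\ref{claim23}.}

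The plan is to exploit Lemma~\ref{coforest} to produce a spanning odd coforest of $G-v$ with a prescribed degree at $w$ (or $u$), assign the coforest one color, and then handle the remaining graph -- which will have a $\min$ FVS of size $1$ with root $v$ -- by Lemma~\ref{feedback_blow_up}. The two parts differ only in how the small leftover at $v$ is absorbed: in (a) it can be distributed into genuine color classes, yielding an honest coloring, while in (b) one extra edge must be allowed to carry two colors.

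For part (a): since $w$ is an even vertex that is internal in $G-v$, Lemma~\ref{coforest}(a) (applied inside $G-v$ with $T=\{x\in V(G-v): d_{G-v}(x)\text{ odd}\}$, so that $w\notin T$ because $d_G(w)$ even and $w\in N_G(v)$ forces $d_{G-v}(w)$ odd --- wait, one must be careful here; the correct setup is to take $T$ so that the coforest complement $\hat H$ has $d_{\hat H}(w)=0$) gives a spanning odd coforest $H$ of $G-v$ with $d_{\hat H}(w)=0$, where $\hat H=(G-v)\backslash E(H)$. Assign color $1$ to $E(H)$. Now $G\backslash E(H)$ has a $\min$ FVS of size $1$ rooted at $v$, and $d_G(v)$ is even, so by Lemma~\ref{feedback_blow_up} it admits a $2$-edge-coloring in colors $2,3$ that is odd away from $v$ and has both color-degrees of $v$ of the same parity. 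If both are odd we are done immediately; if both are even, we recolor: because $d_{\hat H}(w)=0$ all edges at $w$ in $G\backslash E(H)$ survive, and since $w$ is internal in $G-v$ one can reroute as in Lemma~\ref{cor3.3} to flip the parity at $v$ while keeping the coloring odd at $w$ --- more simply, absorb one edge at $v$ into color $1$ by noting $d_G(v)$ even lets us move an edge between classes, which repairs the parity everywhere. This produces an odd $3$-edge-coloring, so $\min\text{ovl}(G)=0$.

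For part (b): here $d_G(w)$ is odd, and $u$ is an internal vertex of $G-v$ with $u\in N_{G-v}(w)$, $u\notin N_G(v)$. Pick $e\in \mathcal B_{wu}$ (a single edge, as $G$ is simple). Apply Lemma~\ref{coforest} inside $G-v$ to obtain a spanning odd coforest $H$ of $G-v$ with the complement $\hat H$ satisfying $d_{\hat H}(u)=1$ and with that one edge being exactly $e$ (this is the ``moreover'' clause of Lemma~\ref{coforest}). Assign color $1$ to $E(H)$. The graph $G'=(G\backslash E(H))\backslash\{e\}$ has a $\min$ FVS of size $1$ rooted at $v$; by Lemma~\ref{feedback_blow_up} colour it with colors $2,3$ odd away from $v$, both parities at $v$ equal. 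If both are even, recolor one edge of $E_{G'}(v)$ into color $1$ to make color $1$ still odd everywhere it needs to be --- or, if that is awkward, use the edge $e$: assign $e$ both colors $1$ and $3$ (or $1$ and $2$) so that $u$ regains parity and $v$ is fixed. Concretely, the leftover edge $e$ joins $w$ and $u$; giving it the two colors that are deficient turns $\text{ovl}$ to exactly $2$ and makes every color class an odd subgraph. One checks: color $1$ is odd at every vertex of $G-v$ except possibly $u,w$, which are corrected by adding $e$; colors $2,3$ are odd away from $v$ by construction and, after the single recoloring of an edge at $v$, are odd at $v$ too. Hence there is an odd $3$-edge-covering $\varphi$ with $\text{ovl}(\varphi)\le 2$.

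The main obstacle I anticipate is the bookkeeping of parities at the two special vertices ($w$ in part (a), and $w$ together with $u$ in part (b)) when the generic Lemma~\ref{feedback_blow_up} coloring lands in the ``both even at $v$'' case: one must verify that a single local recoloring (either rerouting an edge at $v$ into color $1$, or, in (b), double-coloring the edge $e$) simultaneously fixes the parity at $v$ \emph{and} does not break oddness at $w$ or $u$. This requires knowing that the coforest complement edges at those vertices were not consumed, which is exactly why we invoke the precise degree-prescription forms of Lemma~\ref{coforest}; the hypotheses ``$w$ internal in $G-v$'' in (a) and ``$u$ internal in $G-v$, $u\notin N_G(v)$'' in (b) are used precisely to guarantee this freedom.
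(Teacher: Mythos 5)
Your overall architecture (odd coforest via Lemma~\ref{coforest}, then Lemma~\ref{feedback_blow_up} on the leftover graph rooted at $v$) matches the paper's, but there is a genuine gap at the exact point you flag yourself: the ``both color-degrees even at $v$'' outcome of Lemma~\ref{feedback_blow_up}. You keep all of $E_G(v)$ in the graph handed to Lemma~\ref{feedback_blow_up}, so the root has even degree and the lemma gives no control over which parity case occurs; your proposed repairs do not work. Moving one edge $vx$ from color $2$ into color $1$ makes color $1$ even (and positive) at $x$, since $x$ already has $d_H(x)$ odd color-$1$ edges, and it may also leave color $3$ even and positive at $v$; so the ``absorb one edge at $v$ into color $1$'' step destroys oddness. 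Invoking Lemma~\ref{cor3.3} is not available either: that lemma needs a double edge $\mathcal B_{vw}$ and produces an edge \emph{deletion}, while part (a) requires an honest coloring of the simple graph $G$. In part (b) the situation is worse: your fallback is to double-color $e=wu$, but $e$ is not incident to $v$, so it cannot repair a parity defect at $v$ at all. In addition, your part (b) silently assumes $d_{G-v}(u)$ is even (otherwise no odd factor $H$ of $G-v$ can have $d_{\hat H}(u)=1$ with that edge equal to $wu$); the case $d_G(u)$ odd needs a separate construction, and the paper indeed splits on the parity of $d_G(u)$.

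The missing idea is how the paper forces a deterministic parity at $v$: delete the edge $vw$ (and in (b) also $wu$) from the ground graph \emph{before} applying Lemma~\ref{feedback_blow_up}. Then the root $v$ has odd degree in $\hat H$, so the lemma guarantees color $1$ odd and color $2$ even at $v$, and one reinserts $vw$ with color $2$ to fix $v$. This is legitimate precisely because the coforest $H$ was chosen to swallow all of $E_{G-v}(w)$ (respectively, to control $E(u)$), so $w$ is isolated, or has controlled color pattern, in $\hat H$, and adding $vw$ (and, in the bad subcase of (b), giving $wu$ a second color, which is where $\text{ovl}=2$ comes from) does not disturb oddness at $w$ or $u$. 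Without this deletion-and-reinsertion step, the ``both even at $v$'' case is a dead end in both parts, so the proposal as written does not close.
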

\begin{proof} Let $d_G(v)=2k$. 

\smallskip

{\noindent \bf (a)} Since $w\in N_G(v)$ and $d_G(w)$ is even, we observe that $d_{G-v}(w)$ is odd. As $w$ is an internal vertex of the connected graph $G-v$, there is an odd factor $H$ of $G-v$, which is a coforest and contains the entire $E_{G-v}(w)$. Let $\hat H$ be the edge-complement of $H$ in $G \backslash \{vw\}$. Note that $H-v$ is acyclic.
Therefore, by Lemma~\ref{feedback_blow_up}, since $d_{\hat H}(v)$ is odd, there is a $2$-edge-coloring $\varphi:E(\hat H)\to \{1,2\}$ that is odd away from $v$, and at $v$ the color $1$ is odd whereas the color $2$ is even. We assign color $2$ to the edge $vw$ and color $3$ to all of $E(H)$. This completes an odd $3$-edge-coloring of $G$.

\smallskip

{\noindent \bf (b)} We distinguish between two cases depending on the parity of $d_G(u)$. 

\smallskip

\noindent \textbf {Case $1$.} \textit{$d_G(u)$ is even.} In $G-v$, by Lemma~\ref{coforest}(b), we can find an odd factor $H$, which is a coforest and contains $E_G(u)\backslash \{uw\}$. We assign color $3$ to $E(H)$ and let $\hat H$ be the edge-complement of $H$ in $G\backslash \{vw,uw\}$. 
By Lemma~\ref{feedback_blow_up}, since $d_{\hat H}(v)$ is odd, there is an edge-coloring $\varphi: E(\hat H)\to \{1,2\}$ that is odd away from $v$, and at $v$ the color $1$ is odd whereas the color $2$ is even. If $w$ is an isolated vertex in $\hat H$, then we assign the color $2$ to the edge $vw$ and the color $1$ to the edge $uw$. Contrarily, since $d_{\hat H}(w)$ is even, the edge set $E_{\hat H}(w)$ obtains both colors $1$ and $2$. We then assign the color $2$ to both edges $vw$ and $uw$. This results in an odd $3$-edge-coloring of $G$, hence $\min \text{ovl}(G)=0$.

\smallskip

\noindent \textbf {Case $2$.} \textit{$d_G(u)$ is odd.} Let us take an odd factor $H$ of $G-v$, which is a coforest and contains $E_G(u)$.
Assign the color $3$ to $E(H)$ and let $\hat H$ be the edge-complement of $H$ in $G\backslash \{vw,uw\}$. Again, 
by Lemma~\ref{feedback_blow_up}, since $d_{\hat H}(v)$ is odd, there is an edge-coloring $\varphi: E(\hat H)\to \{1,2\}$ that is odd away from $v$, and at $v$ the color $1$ is odd whereas the color $2$ is even. 
Noting that $d_{\hat H}(w)$ is odd, the edge set $E_{\hat H}(w)$ obtains exactly one of the colors $1$ or $2$.  If it obtains the color $1$, then we assign the color $2$ to the edge $vw$, resulting in an odd $3$-edge-coloring $\varphi$, that is, $\text{ovl}(\varphi)=0$.
Contrarily, if $E_{\hat H}(w)$ obtains the color $2$, then we color with $2$ both edges $vw$ and $uw$; thus the edge $uw$ receives two colors in this case. In either case, we have produced an odd $3$-edge-covering $\phi$ of $G$, with $\text{ovl}(\phi)\leq 2$. 
\end{proof}
The next corollary rephrases
Claim 7 from the proof of Theorem 1 in~\cite{petruvsevski2019coverability}.
\begin{coro}\label{connectivity_at_least_3}
Let $G$ be a simple connected graph of odd order. 
If $v$ is an even internal vertex of $G$ and $G-v$ contains no cut vertices, then $\min \text{ovl}(G)\leq 2$.  
\end{coro}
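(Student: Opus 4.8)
The plan is to reduce Corollary~\ref{connectivity_at_least_3} to Lemma~\ref{claim23} by a careful choice of the neighbor $w$ of $v$ (and, in the odd case, a suitable further vertex $u$), exploiting the fact that $G-v$ is $2$-connected. First I would dispose of small cases: if $v(G-v)$ is tiny (so that $G-v$ is a single edge or a triangle), handle these directly by hand, since then $G$ has very few vertices and $\min\text{ovl}(G)\le 2$ can be checked explicitly (or is already covered by earlier lemmas such as Lemma~\ref{doubleK_t} or Lemma~\ref{two_ways} after reduction). So assume $G-v$ is $2$-connected with at least, say, four vertices; in particular $G-v$ has no cut vertices and every vertex of $G-v$ is internal in $G-v$.

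Next I would split on the existence of an even neighbor of $v$ in $G$. If some $w\in N_G(v)$ has $d_G(w)$ even, then since $G-v$ is $2$-connected (hence every vertex, including $w$, is internal in $G-v$), part (a) of Lemma~\ref{claim23} applies verbatim and gives $\min\text{ovl}(G)=0$. So assume every neighbor of $v$ has odd degree in $G$. Now pick any $w\in N_G(v)$; I need an internal vertex $u$ of $G-v$ with $u\in N_{G-v}(w)$ and $u\notin N_G(v)$. Since $G-v$ is $2$-connected and has at least four vertices, $w$ has at least two neighbors in $G-v$; the obstruction is only that all neighbors of $w$ in $G-v$ might also be neighbors of $v$. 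To rule this out I would argue: if $N_{G-v}(w)\subseteq N_G(v)$ for the chosen $w$, then try another neighbor $w'$ of $v$, or another component of structure — more precisely, since $G-v$ is connected, if for \emph{every} $w\in N_G(v)$ we had $N_{G-v}(w)\subseteq N_G(v)$, then $N_G(v)$ would be a union of connected components of $G-v$ closed under taking neighbors, forcing $N_G(v)=V(G-v)$ (as $G$ is connected); that is, $v$ is adjacent to everything. Handle that degenerate situation separately: if $N_G(v)=V(G-v)$ then $v$ is an even vertex joined to all of the $2$-connected graph $G-v$, and here I would either invoke a parity count (the order of $G$ is odd, so $v(G-v)=d_G(v)$ is even, consistent) and build an odd $3$-edge-covering directly — e.g. take a spanning structure of $G-v$, color it, and patch the star at $v$ using at most one overlap — or observe this is exactly the situation addressed by arguments like Lemma~\ref{two_ways} generalized to arbitrary $2$-connected $G-v$.

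In all remaining (generic) cases I will have produced $w\in N_G(v)$ with $d_G(w)$ odd and an internal vertex $u$ of $G-v$ with $u\in N_{G-v}(w)$, $u\notin N_G(v)$, and then part (b) of Lemma~\ref{claim23} directly yields an odd $3$-edge-covering $\varphi$ with $\text{ovl}(\varphi)\le 2$, completing the proof. The main obstacle I anticipate is precisely the degenerate case $N_G(v)=V(G-v)$ (or near-degenerate cases where $N_G(v)$ is almost all of $G-v$ and every neighbor of $v$ has odd degree): there Lemma~\ref{claim23} has no room to find the vertex $u$, so a self-contained covering construction is needed. I expect this to be manageable because $G-v$ being $2$-connected gives a spanning even subgraph / ear decomposition to work with, and the single permitted overlap of $\text{ovl}\le 2$ gives exactly the slack needed to fix the parity at $v$; but writing it cleanly, including the parity bookkeeping forced by $v(G)$ being odd, will be the delicate part.
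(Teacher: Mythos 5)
Your reduction is exactly the one the paper uses: by Lemma~\ref{claim23}(a) you may assume every neighbor of $v$ has odd degree, and by Lemma~\ref{claim23}(b) (every vertex of $G-v$ is internal, since $G-v$ has no cut vertex) you may assume $N_{G-v}(w)\subseteq N_G(v)$ for every $w\in N_G(v)$; connectivity of $G-v$ then forces $N_G(v)=V(G-v)$, so every vertex of $G-v$ is adjacent to $v$ and has odd degree. Up to this point your argument is sound, and your small-case remarks are harmless (though unnecessary, and note $G-v$ has even order, so it is never a triangle).

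The genuine gap is the degenerate case itself, which you leave unproved: this is precisely where the overlap-$2$ covering must be produced, and it is not vacuous --- it contains the canonical extremal examples such as the wheel $W_4$, where $v$ is adjacent to every vertex of the cycle $G-v$ and all rim degrees are odd. Your two suggested escapes do not close it. Invoking ``Lemma~\ref{two_ways} generalized'' is off target: that lemma concerns a multigraph in which $v$ meets a path through bouquets of size $2$ and yields plain $2$-edge-colorings, not the covering trick needed here, and it has no evident extension to an arbitrary $2$-connected $G-v$ in a simple graph. The vaguer ``color a spanning structure and patch the star at $v$ with one overlap'' is exactly the parity problem (even degree at $v$, odd degrees elsewhere) that constitutes the corollary, so it cannot be waved at. The paper closes the case with a short explicit construction absent from your sketch: fix $w\in N_G(v)$, list $N_{G-v}(w)=\{q_1,\dots,q_{2k}\}$ (even in number because $d_G(w)$ is odd), give color $1$ to all $wq_j$, color $2$ to all $vq_j$, give the single edge $vw$ both colors $1$ and $2$, and give color $3$ to every remaining edge. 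The classes $1$ and $2$ are odd stars centered at $w$ and $v$; the class $3$ is odd because $d_G(v)$ is even, all other degrees are odd, and each $q_j$ loses exactly two edges; the unique doubly colored edge $vw$ gives $\text{ovl}=2$. Without this (or an equally concrete) construction, your proof is incomplete at its decisive step.
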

\begin{proof} 
We argue by contradiction. Let $w$ be a neighbor of $v$, which is an internal vertex of $G-v$ by assumption. 
Lemma~\ref{claim23} (a) implies that $d_G(w)$ is odd, and Lemma~\ref{claim23} (b) implies that every neighbor $u$ of $w$ must also be a neighbor of $v$. By repeatedly applying Lemma~\ref{claim23} and noticing that $G-v$ is connected, we conclude that every vertex of $G-v$ has odd degree in regard to $G$ and is adjacent to $v$. 
Now we list the set of common neighbors of $v$ and $w$ as 
$\{ q_1,\cdots,q_{2k}\}$. Assign the color $1$ to $wq_j$ and the color $2$ to $vq_j$ for each $j=1,\cdots, 2k$. Assign both colors $1$ and $2$ to $vw$. Finally, note that the remaining edges form an odd graph and assign the color $3$ to all of them. \end{proof}

\section{Proof of Theorem~\ref{main}}
This section is devoted to a proof of Theorem~\ref{main}. We will follow some ideas from~\cite{petruvsevski2018odd}. For brevity of presentation, it is convenient to introduce the ad hoc notion \textit{nice coloring} of a loopless graph $G$ defined as an odd edge-coloring with color set $\{1,2,3,4\}$ such that the color class $E_4$ satisfies the following two conditions:
\begin{enumerate}
\item[$(i)$] $|E_4|\in\{0,1\}$;
\item[$(ii)$] if $\mathcal{B}_{xy}\cap E_4\neq \emptyset$ then another common color (besides $4$) appears at $x$ and $y$.
\end{enumerate}

Thus, Theorem~\ref{main} states that, excluding the Shannon triangles $S_{2,2,2}$ and $S_{2,2,1}$, every other loopless connected graph $G$ admits a nice coloring.
Arguing by contradiction, suppose the above statement is false. Let us consider a counterexample $G$ that minimizes $m(G)$. In view of Theorem~\ref{Pyber_4}, the order $v(G)$ is odd. Throughout the proof, let $\mathcal{O}$ (respectively, $\mathcal{E}$) denote the set of odd (respectively, even) vertices of $G$. So $|\mathcal{O}|$ is even and $|\mathcal{E}|$ is odd. We provide several structural constraints for the supposed minimal counterexample $G$.

\smallskip

\noindent \textbf{Claim 1.} \textit{$G$ is a reduced graph.}

Otherwise, $\mathrm{red}(G)\subsetneq G$. By the minimality choice of $G$, there exists a nice coloring $\varphi$ of $\mathrm{red}(G)$. Extend $\varphi$ to $E(G)$ by applying the following at any bouquet $\mathcal{B}_{xy}$ of $G$ with size greater than $2$: if $\mathcal{B}_{xy}\cap E(\mathrm{red}(G))$ is a singleton colored by $4$, then assign to $\mathcal{B}_{xy}\backslash E(\mathrm{red}(G))$ a different color that appears at both $x$ and $y$ under $\varphi$ (such a color exists by the assumption $(ii)$); contrarily, if $\mathcal{B}_{xy}\cap E(\mathrm{red}(G))$ is not a singleton colored by $4$, then assign to $\mathcal{B}_{xy}\backslash E(\mathrm{red}(G))$ the smallest color appearing in $\mathcal{B}_{xy}\cap E(\mathrm{red}(G))$. This furnishes a nice coloring of $G$, a contradiction. 
\hfill$\diamond$

\smallskip

Next, we show that there are no cut vertices in $G$.

\smallskip

\noindent \textbf{Claim 2.} \textit{$G$ is a block graph.}

Arguing by contradiction, let $s$ be a cut vertex of $G$. We show that:
\begin{enumerate}
\item[$(1)$] every component of $G-s$ has an even order;
\item[$(2)$] for every $s$-lobe, the corresponding degree of $s$ is even; 
\item[$(3)$] $s$ is the only cut vertex of $G$, and there are only two blocks.
\end{enumerate}
First, we show $(1)$. Supposing the opposite, say $G_1$ is an $s$-lobe of $G$ such that $v(G_1)$ is even. Select an edge $e\in E_{G_1}(s)$. There exists a spanning odd coforest $H_1$ of $G_1$ such that $E_{\hat{H_1}}(s)\subseteq\{e\}$, where $\hat{H_1}=G_1\backslash E(H_1)$. Let $G_2=G-(G_1-s)$. Since the graph $G_2$ is connected and the order $v(G_2)$ is even, there exists a spanning odd coforest $H_2$ of $G_2$. We construct the following nice coloring of $G$: color $E(H_1)$ with $1$ (note that this introduces the color $1$ at both endvertices of $e$); take an odd edge-coloring of $\hat{H_1}\backslash E_{\hat{H_1}}(s)$ with color set $\{2,3\}$; if $E_{\hat{H_1}}(s)\neq \emptyset$ then color $e$ by $4$; assign the color $2$ to $E(H_2)$; color $E_{\hat{H_2}}(s)$ with $1$ (respectively, $3$) if it is even-sized (respectively, odd-sized); extend this coloring of $E_{\hat{H_2}}(s)$ to an edge-coloring of $\hat{H_2}$ with color set $\{1,3\}$ that is odd away from $s$. The obtained contradiction settles $(1)$.

Now we show $(2)$. Suppose the opposite, say $G_1$ is an $s$-lobe such that $d_{G_1}(s)$ is odd. In view of $(1)$, $v(G_1)$ is odd as well. Since $s$ is an internal vertex of $G_1$, there is a spanning odd coforest $H_1$ of $G_1-s$. Letting $F_1=G_1\backslash E(H_1)$, note that $F_1-s$ is a forest and that the degree $d_{F_1}(s)=d_{G_1}(s)$ is odd. Hence, $F_1$ admits an edge-coloring with color set $\{1,2\}$ that is odd away from $s$ and at $s$ the color $1$ (respectively, $2$) is odd (respectively, even). Combine such an edge-coloring of $F_1$ with a monochromatic coloring of $E(H_1)$ by $3$. Observe that this gives an edge-coloring $\varphi_1$ of $G_1$ with color set $\{1,2,3\}$ that is odd away from $s$ and at $s$:
\begin{itemize}
\item the color $1$ is odd;
\item the color $2$ is even (possibly non-appearing);
\item the color $3$ does not appear.
\end{itemize}
Define $G_2=G-(G_1-s)$. If $d_{G_2}(s)$ is odd, then take a $(V(G_2)-s)$-join $H_2$ of $G_2$ that is a coforest; thus, $G_2\backslash E(H_2)$ is a forest and $s$ is an odd vertex in this forest. Apply an odd edge-coloring to $G_2\backslash E(H_2)$ with color set $\{2,3\}$ such that $E_{G_2\backslash E(H_2)}(s)$ is monochromatically colored by $2$. Assign the color $1$ to $E(H_2)$. This completes an odd $3$-edge-coloring of $G$. Consequently, it must be that $d_{G_2}(s)$ is even. Moreover, if the color $2$ does not appear at $s$ under $\varphi_1$, then (even if $d_{G_2}(s)$ happens to be even) the same coloring of $E(G_2)$ combines with $\varphi_1$ to produce an odd $3$-edge-coloring of $G$. So, the color $2$ appear (and is even) at $s$ under $\varphi_1$.

Next, we argue that $N''_{G_2}(s)=\emptyset$. Supposing the opposite, let $u\in N''_{G_2}(s)$. Define $T=\{x\in (V(G_2)-s): d_G(x) \text{ is even}\}$. Since $T$ is even-sized, let $F_2$ be a forest that is a $T$-join of $G_2$. The only even vertex of $\hat{F_2}=G_2\backslash E(F_2)$ is $s$. It must be that $d_{F_2}(s)=0$, for otherwise an odd $3$-edge-coloring of $G$ would be obtained by combining $\varphi_1$ with a monochromatic coloring of $E(\hat{F_2})$ by the color $1$ and an odd edge-coloring of $F_2$ with color set $\{2,3\}$ under which the color $2$ appears at $s$. However, then we produce a nice coloring of $G$ as follows: color $\mathcal{B}_{su}$ by $2$ and $4$, assign the color $1$ to $E(\hat{F_2})\backslash \mathcal{B}_{su}$, apply an edge-coloring to $F_2$ with color set $\{2,3\}$ that is odd away from $u$ and colors $E_{F_2}(u)$ monochromatically with $2$ (respectively, $3$) if it is even-sized (respectively, odd-sized), and combine with $\varphi_1$. The obtained contradiction proves that $E_{G_2}(s)$ induces a star.

Now take a spanning odd coforest $H_2$ of $G_2-s$. Let $\hat{H_2}=(G_2-s)\backslash E(H_2)$. Note that $\hat{H_2}$ is a forest (within $G_2-s$). If all of $N_{G_2}(s)$ are isolated vertices in $\hat{H_2}$, then we complete a nice coloring of $G$ as follows: color a selected edge $e\in E_{G_2}(s)$ by $4$; color $E_{G_2}(s)\backslash\{e\}$ with $2$; assign $E(H_2)$ with the color $1$; take an odd edge-coloring of $\hat{H_2}$ with color set $\{2,3\}$, and combine with $\varphi_1$. Contrarily, if there is a vertex $u\in N_{G_2}(s)$ such that $d_{\hat{H_2}}(u)>0$, then color the edge $su$ by $4$, $E(H_2)$ by $3$, use an edge-coloring of $G_2\backslash(E(H_2)\cup\{su\})$ with color set $\{1,2\}$ that is odd away from $s$ and under which the color $2$ is odd at $s$. This completes a nice coloring of $G$. Indeed, both colors $1$ and $2$ appear at $s$ and at least one of them appears at $u$. The obtained contradiction settles $(2)$.

Finally, let us show $(3)$. Consider an end-block $B$ of $G$, and let $s$ be the cut vertex of $G$ contained in $V(B)$. So $s$ is an internal vertex of $B$, the order $v(B)$ is odd by $(1)$, and $d_B(s)$ is even by $(2)$. Consequently, there exists a $(V(B)-s)$-join $H$ of $B$ that is a coforest and uses all of $E_B(s)$. Supposing there are more than $2$ blocks of $G$, let $G'=G-(B-s)$. Then $G'$ is surely connected but not a block graph. Therefore, by the minimal choice of $G$, there is a nice coloring $\varphi'$ of $G'$. Take $c\in\{1,2,3\}$ to be a color that appears at $s$ under $\varphi'$. Color $E(H)$ by $c$ and take an odd edge-coloring of $\hat{H}=B\backslash E(H)$ with color set $\{1,2,3\}-\{c\}$. This completes a nice coloring of $G$, a contradiction that settles $(3)$.

Now then, in view of Figure~\ref{fig:nice}, it cannot be that both blocks of $G$ come from $\{S_{2,2,2},S_{2,2,1}\}$. Consequently, there is an end-block $B$ of $G$ such that the other end-block  $G'=G-(B-s)\notin\{S_{2,2,2},S_{2,2,1}\}$. However, then the nice coloring constructed in the demonstration of $(3)$ yields a final contradiction. 
\hfill$\diamond$
\begin{figure}[ht!]
	$$
		\includegraphics[scale=0.35]{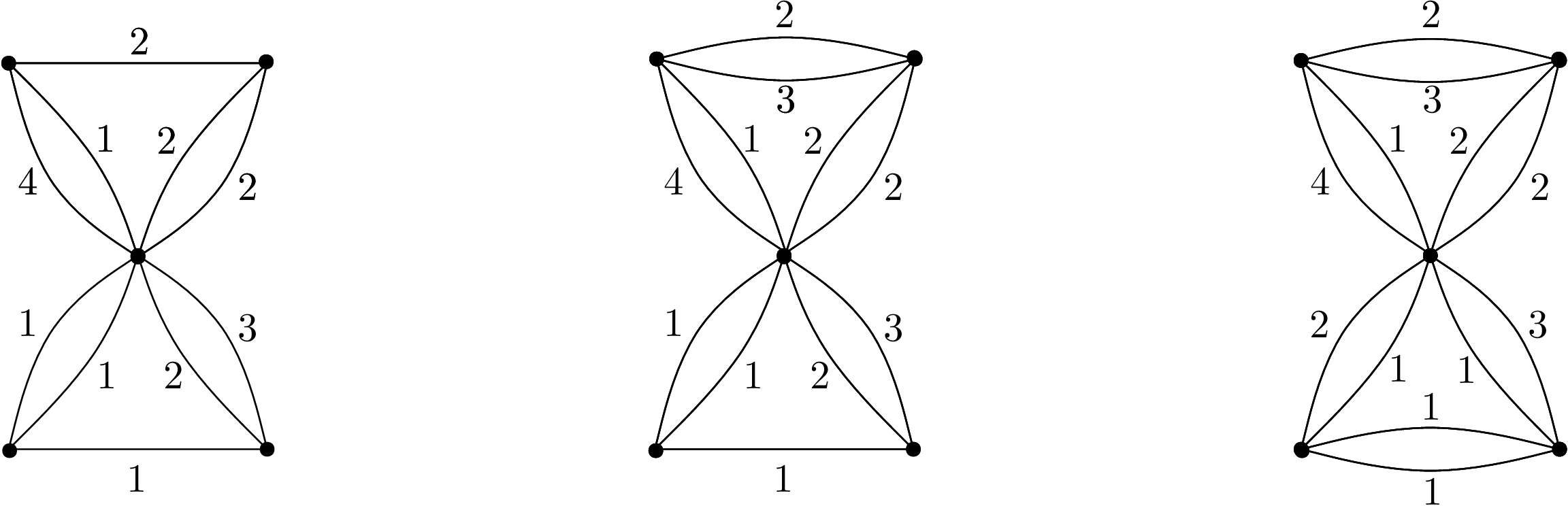}
	$$
	\caption{Nice colorings of three graphs, each comprised of two block that are $S_{2,2,2}$ or $S_{2,2,1}$.}
	\label{fig:nice}
\end{figure}
\smallskip

Let us draw some conclusions. In view of Corollary~\ref{even_vertex_odd-sized_bouquet}, Claim~2 implies that every vertex $v\in \mathcal{E}$ satisfies that $N_{G}(v)=N_{G}''(v)$. We have already deduced that $v(G)$ is odd. It must be that $v(G)\geq5$, for otherwise Claim~2 alone would imply that $G\in\{S_{1,1,1},S_{2,1,1}\}$, which is absurd as both $S_{1,1,1}$ and $S_{2,1,1,}$ contain an even vertex $v$ for which $N'_{G}(v)\neq\emptyset$. 

\smallskip

\noindent \textbf {Claim 3.} \textit{If $v\in \mathcal E$ and $B$ is an end-block of 
$G-v$, then $\text{Int}_{G-v}(B)\subseteq N_G(v)=N_G''(v)$. }

Since $G$ is a block graph, $\mathrm{Int}_{G-v}(B)\cap N_{G}(v)\neq \emptyset$. Hence, by Lemma~\ref{lemma3.4}, $\mathrm{Int}_{G-v}(B)- N_{G}(v)$. Indeed, supposing $\mathrm{Int}_{G-v}(B)- N_{G}(v)\neq\emptyset$, from the connectedness of $G[\mathrm{Int}_{G-v}(B)]$ it follows that the edge cut $E[\mathrm{Int}_{G-v}(B)\cap N_{G}(v),\mathrm{Int}_{G-v}(B)- N_{G}(v)]\neq\emptyset$, that is, a pair $w,q$ of adjacent vertices $w\in\mathrm{Int}_{G-v}(B)\cap N_{G}(v)$ and $q\in\mathrm{Int}_{G-v}(B)- N_{G}(v)$ exists. However, such a pair is impossible by Lemma~\ref{lemma3.4}.
\hfill$\diamond$

\smallskip

\noindent \textbf {Claim 4.}  \textit{There exists a vertex $v\in \mathcal{E}$ such that $G-v$ is not a block graph.} 

Suppose for a contradiction that for every vertex $v\in \mathcal E$ the graph $G-v$ is a block graph. 
By Claim 3, for each $v\in \mathcal{E}$, we have $N_G(v)=N_G''(v)=V(G)-v$. 

If $|\mathcal E|=1$, we first observe that every vertex in $G-v$ has odd degree. So there must exist a pair of vertices 
$u,w$ such that $|\mathcal B_{uw}|=1$. Denote $e=uw$, and below we will show that removing $e$ from $E(G)$ results in a graph $G'=G\backslash \{e\}$ that admits an odd $3$-edge-coloring. 
Since $G-v$ is a block graph, $G'-v$ is connected and there is a path $P$ in $G'-v$ connecting $u$ and $w$. 
We can see that $H= (G'-v)\backslash E(P)$ is an odd graph. 
Assign the color $3$ to $E(H)$. 
List the vertex set of $G-v-V(P)$ as $\{v_1,v_2,\ldots, v_m\}$ and write 
$\mathcal{B}_{vv_i}=\{e_{2i-1},e_{2i}\}$ for each $i=1,\ldots, m$. 
Then the edges of $\{e_i \big| \, 1\leq i\leq 2m, i \text{ is odd}\}$ are assigned with the color $1$ and 
$\{e_i\big| \, 1\leq i\leq 2m, i \text{ is even}\}$ are assigned with the color $2$. 

Observe that by excluding isolated vertices, the remaining graph of $ G'\backslash E(H)$ has exactly the same structure 
as described in 
Lemma~\ref{two_ways}. 
If $m$ is even,  we use the first $2$-edge-coloring from Lemma~\ref{two_ways}. 
Contrarily, if $m$ is odd, we use the second  $2$-edge-coloring from Lemma~\ref{two_ways}.
In either case, we eventually obtain an odd $3$-edge-coloring for $G'=G\backslash \{e\}$. Note that the color $3$ appears at both endvertices of $e$. Assigning the color $4$ to $e$ completes a nice coloring of $G$.

For the case where $|\mathcal{E}|\geq 3$, according to Lemma~\ref{doubleK_t}, $G$ must have at least two odd vertices. Moreover, there must exist 
two adjacent odd vertices, say $y$ and $z$, for which $|\mathcal B_{yz}|=1$.
Let us select a triplet of vertices $v, w, x$ from $\mathcal{E}$. 
Now, we denote $\mathcal{B}_{vy}=\{e',e''\}$ and $\mathcal{B}_{wz}=\{f',f''\}$. Let us define $T=V(G)-\{v,w,x,y,z\}$. Notice that $T$ is  
a set containing an even number of vertices.  
Below we will define a $T$-join $H$ of $G-\{v,w,x\}$ which is also a coforest. 

Specifically, let $p_1,\ldots, p_{2\ell}$ and $o_1,\ldots, o_{2k}$ represent the even and odd vertices in $T$, respectively. It is worth noting that $\ell$ and $k$ can be zero. When both $\ell$ and $k$ are zero, $H$ is an empty graph.
Otherwise, by definition, the edge set $E(H)$ is comprised from the following parts: 
\begin{enumerate}
 \item $E(G[\mathcal{O}])\backslash \{yz\}$;
\item All the bouquets $\mathcal{B}_{p_ip_i'}$, for $i, i'=1,\ldots, 2\ell$ and $i\neq i'$, $\mathcal{B}_{p_io_j}$, and $\mathcal{B}_{p_iz}$, for $i=1,\ldots,2\ell$, $j=1,\ldots, 2k$;
\item one of the edges in each bouquet $\mathcal{B}_{p_iy}$, for $i=1,\ldots, 2\ell$.
\end{enumerate}
We then denote $\hat{H}= (G-\{v,w,x\}) \backslash E(H)$ as the edge-complement of $H$ in $G-\{v,w,x\}$. 
In fact, by definition, $\hat{H}$ is an odd star centered at $y$, with the vertices
 $z$, $p_1,\ldots, p_{2\ell}$ being the leaves.  
 
 Consider the graph $G\backslash\{e''\}$. We assign the color $1$ to $E(H)\cup \{e',f'\}\cup \mathcal{B}_{vz}$, the color $2$ to $\mathcal{B}_{xz}\cup \{f''\}$, and the color $3$ to $E(\hat{H})$. 
 For each $p_i$, $i=1,\ldots, 2\ell$, assign the color $3$ to $\mathcal{B}_{p_ix}$
 and also to $\mathcal{B}_{yx}$. 
 For each $o_j$, $j=1,\ldots, 2k$, 
 the two edges in $\mathcal{B}_{o_jx}$ are assigned the colors $2$ and $3$, respectively. It can be verified that so far the colors $2$ and $3$ are even at $x$. 
 Next, we assign the two edges in $\mathcal{B}_{vx}
$ with the colors $2$ and $3$, respectively, 
assign both edges in $\mathcal{B}_{wx}$ with the color $2$, and assign all the remaining uncolored edges of $G$ with the color $1$. 
This yields an odd $3$-edge-coloring for $G\backslash\{e''\}$. Clearly, the color $1$ (and the color $3$) appears at both endvertices of $e''$. By assigning the color $4$ to $e''$, we complete a nice coloring of $G$. 
\hfill$\diamond$

\smallskip

According to Claim 4, let us choose $v\in \mathcal E$ such that $G-v$ is not a block graph. For the next three claims, which come from~\cite{petruvsevski2018odd}, we fix an end-block $B$ of $G-v$ and denote by $s$ the unique cut vertex of $G-v$ that belongs to $V(B)$. 

\smallskip

\noindent \textbf {Claim 5.} \textit{$\text{Int}_{G-v}(B)\subseteq \mathcal{O}$.}  

Suppose $w$ is an even vertex in $\text{Int}_{G-v}(B)$. 
Let $B'$ be another end-block of $G-v$, and $w'\in \text{Int}_{G-v}(B')$. 
Note that there is no edge incident to both $w$ and $w'$, and $v$ is a common neighbor of $w$ and $w'$. We apply Lemma~\ref{lemma3.4} to the triple $\{w,v,w'\}$ and conclude that $w'$ is a cut vertex of $G-w$, 
which means that $G-\{w,w'\}$ must be a disconnected graph. 
This has several strong implications. 
Firstly, $\text{Int}_{G-v}(B')=\{w'\}$. Secondly, by Claim 3, we conclude that $\text{Int}_{G-v}(B)=\{w\}$.
Finally, $N_G(v)=N''_G(v)=\{w,w'\}$. 
Recall that $s$ is the unique cut vertex of $G-v$ contained in $B$. 
Let us denote by $s'$ the unique 
cut vertex of $G-v$ contained in $B'$ (note that it is possible that $s=s'$). 
As a particular consequence of the above arguments, we see
$v$ and $s$ (or $s'$) are non-adjacent. 
Note that $|\mathcal{B}_{ws}|=2$, since $w$ is an even vertex.

In the graph $G-\{v,w,w'\}$, we apply Lemma~\ref{forest_coforest} 
to find an odd spanning coforest $H$, and write $\hat{H}=(G-\{v,w,w'\})\backslash E(H)$.
Choose an edge $e \in \mathcal{B}_{w's'}$. 
Assign the color $3$ to $E(H)$, and use the colors $1$ and $2$ to give $E(\hat{H}+e)$ an odd $2$-edge-coloring such that $e$ receives the color $1$. Assign the color $3$ to both edges in $\mathcal{B}_{sw}$, assign the color $1$ to both edges in
$\mathcal{B}_{vw'}$, and color $\mathcal{B}_{vw}$ by  $1$ and $3$.  If $|\mathcal{B}_{w's'}|=1$, then 
we have already obtained an odd $3$-edge-coloring of $G$. 
Otherwise, let us denote by $e'$ the other edge in $\mathcal{B}_{w's'}$ other than $e$. By assigning the color $4$ to $e'$ we complete a nice coloring of $G$, since the color used on $e$ (namely, the color $1$) appears at both endvertices of $e'$.\hfill$\diamond$

\smallskip

\noindent \textbf {Claim 6.} \textit{The graph $B-s$ is a simple graph.}
 
Suppose that we find vertices $w'$ and $w''\in \text{Int}_{G-v}(B)$ 
such that $|\mathcal B_{w'w''}|=2$. Note that by Claim 5, $w', w''\in \mathcal{O}$.
From Claim 3 we know that $v$ is adjacent to all vertices in $\text{Int}_{G-v}(B)$. And also $s$ and $v$ are connected in $G-\text{Int}_{G-v}(B)$. It follows that 
$G-w'$ and $G-\{w',w''\}$ are both connected.  
We then apply Lemma~\ref{cor3.3} to obtain a contradiction. \hfill$\diamond$

\smallskip

\noindent \textbf {Claim 7.} \textit{$s\in \mathcal O$ and $\text{Int}_{G-v}(B)\cap N_G''(s)=\emptyset$.} 
 
Suppose $s\in \mathcal E$. Lemma~\ref{lemma3.4} implies that $\text{Int}_{G-v}(B)\subseteq N_G(s)=N_G''(s)$.
Take any $w \in \text{Int}_{G-v}(B)$. Claims 3 and 6 imply that $N_G''(w)=\{v,s\}$.
Since $d_G(w)$ is odd, $N_G'(w)$ consists of an odd number of internal vertices of $B$, listed as $\{u_1,\ldots,u_{2\ell+1}\}$. For each $j=1,\ldots,2\ell+1$, choose one edge $e_j\in \mathcal B_{vu_j}$. Let $T$ be the set of all even vertices in the graph $G-v - (B-\{s\})$. Clearly, $T$ is even-sized and contains 
$s$. We apply Lemma~\ref{forest_coforest} 
to find a forest $F$ which is also a $T$-join.

Let $\mathcal B_{ws}=\{e,e'\}$. Consider the graph $G'=G\backslash \{e'\}$. 
In view of Lemma~\ref{forest}, assign the color $1$ to  $E_F(s)$ and extend  
to an odd $2$-edge-coloring of $F$. 
Then assign the color $1$ to $\{wu_1,\ldots, wu_{2\ell+1}\}$, and assign the color $2$ to $\{e_1,\ldots,e_{2\ell+1}\} \cup \mathcal B_{vw} \cup \{e\}$. The remaining uncolored edges of $G'$
form an odd graph; assign the color $3$ to all of them. 
Thus, an odd $3$-edge-coloring for $G'=G\backslash \{e'\}$ is obtained. Complete a nice coloring of $G$ by assigning the color $4$ to $e'$. This shows that $s\in \mathcal O$. 

Now, Claim 5 and Lemma~\ref{cor3.3} imply that $\text{Int}_{G-v}(B)\cap N_G''(s)=\emptyset. 
$\hfill$\diamond$

\smallskip

We are now ready to make one final claim that will lead to the completion of our proof. Remember, $v$ is an even vertex of $G$ for which $G-v$ is not a block graph.
Our next claim shows that there is an end-block of $G-v$  whose order is at least $4$.

\smallskip

\noindent \textbf {Claim 8.} \textit{There is an end-block $B$ of $G-v$ with $v(B)\geq 4$.}

Firstly, no end-block $B$ has order $1$ by the very definition of a cut vertex. Secondly, in view of Claims 5, 6, and 7, no end-block $B$ has order $3$ vertices; indeed, the supposition $v(B)=3$ would force $B=K_3$, which contradicts with the inclusion $\mathrm{Int}_{G-v}(B)\subseteq \mathcal{O}$. 
Suppose now that an end-block of $G-v$ contains exactly two vertices, say $u$ and $s$, 
where $u$ is an internal vertex of the block (and of $G-v$) and $s$ is a cut vertex of $G-v$. 
By Claims 5, 6, and 7, we know that $d_G(u)=3$, $d_G(s)\geq 3$ is odd,
and $|\mathcal B_{us}|=1$. 
Since $s$ is a cut vertex of $G-v$, it must be adjacent to some vertex other than $u$ and $v$ via an edge $e$. 
Below, we will show that $G'=G\backslash \{e\}$ admits an odd $3$-edge-coloring under which a common color appears at both endvertices of $e$. 

Note that the vertex $s$ is not a cut vertex in $G'$ and it is also not a cut vertex of $G'-u$. 
In the graph $G'-u$, since $d_{G'-u}(s)$ is odd, 
Lemma~\ref{coforest}(a) implies that there exists a spanning odd coforest $H$ 
such that $d_{\hat{H}}(s)=0$, where $\hat{H}=(G'-u)\backslash E(H)$.
 Since $d_{G'-u}(v)$ is positive and even, both $d_H(v)$ and $d_{\hat{H}}(v)$ are odd. 
Assign the color $3$ to $E(H)$, 
and take an odd $2$-edge-coloring of $\hat H$ with color set $\{1,2\}$ under which all of $E_{\hat{H}}(v)$ receives the color $1$. 
Finally, assign the color $1$ to $\mathcal{B}_{uv} \cup \{us\}$.  
This procedure results in an odd $3$-edge-coloring for $G'=G\backslash \{e\}$ under which the color $3$ appears at both endvertices of $e$. Thus, by assigning the color $4$ to $e$ we complete a nice coloring of $G$. 
\hfill$\diamond$

\medskip
  
\noindent {\bf End of the proof of Theorem~\ref{main}.} 
Let $B$ be an end-block of $G-v$ containing at least $4$ vertices, and $s\in B$ be a cut vertex. Note that $d_B(w)\geq 3$, for $w\in Int_{G-v}(B)$. Let $G_1=G-v$. Hence, by applying Lemma~\ref{coforest},
$G_1$ admits a spanning odd coforest $F$, and therefore $\hat{F}=G_1 \backslash E(F)$ is a forest. 
We \textbf{assert} that $F$ contains all edges in $B$. In other words,  the edge set of $\hat{F}$ does not intersect with the edge set of $B$. Suppose otherwise; then 
this intersection is again a forest and therefore has a leaf vertex in $B-s$. It means there is a vertex 
$w \in \text{Int}_{G-v}(B)$ such that $d_{\hat{F}}(w)=1$. However, this implies that $d_{F}(w)$ is even and positive, which is absurd. 
The assertion follows. 

By the previous claims, $B-s$ is a simple connected graph with minimum degree at least 2. Then $B-s$ contains a cycle $C$. Let $F'=F\backslash E(C)$ and note that $F'$ is a spanning odd coforest of $G_1\backslash E(C)$. 
Let $e$ be an arbitrary edge of $C$, and let $P=C\backslash \{e\}$. 
Recall that $V(P)\subseteq N''_G(v)$. We will prove that $G\backslash \{e\}$ admits an odd 3-edge-coloring under which a common color appears at both endvertices of $e$. 

Assign the color $3$ to $E(F')$ and take an odd $2$-edge-coloring of the forest $\hat{F}$ with color set $\{1,2\}$. 
Consider any vertex $w\in (N_G''(v)- V(P))$. We extend the odd coloring to the edges of $\mathcal{B}_{vw}$. If the edge set $E_{G_1}(w)$ has received 
only the color $3$, then color the bouquet $\mathcal B_{vw}$ (consisting of two edges) with  
$1$ and $2$. 
Contrarily, if the edge set $E_{G_1}(w)$ has already received the color $1$ (or the color $2$), we assign this color 
also to $\mathcal B_{vw}$.
The remaining uncolored edges are the edges of $P$ and the edges connecting $P$ and $v$. These form a graph $H$, which has exactly the same structure as described in
Lemma~\ref{two_ways}, leading to two possible cases.  
\begin{enumerate}
\item[Case $1$.] If both colors $1$ and $2$ appear an
 even number of times in 
the edge set $\bigcup_{w\in (N_G''(v)\backslash V(P))}\mathcal B_{vw}$, we use the first method in Lemma~\ref{two_ways} 
 to assign colors to $E(H)$.
 \item[Case $2$.] If both colors $1$ and $2$ appear an odd number of times in the edge set $\bigcup_{w\in (N_G''(v)\backslash V(P))}\mathcal B_{vw}$, 
we use the second method in Lemma~\ref{two_ways} to assign colors to $E(H)$. 
\end{enumerate}

 In either case, we can obtain an odd $3$-edge-coloring for the graph $G\backslash \{e\}$. Note that the color $3$ appears at both endvertices of $e$. By assigning the color $4$ to $e$ we complete a nice coloring of $G$, a contradiction.
\qed

\section{Proof of Theorem~\ref{covering_conjecture}}
This section is devoted to a proof of Theorem~\ref{covering_conjecture}. 
Our proof, conducted by contradiction, considers a possible minimal counterexample and employs the technique of induction.
The main inductive step is presented in the following proposition.
\begin{prop}\label{main_step}
Let $G$ be a simple connected graph with $\min \text{ovl}(G)\geq 3$, and let $v\in V(G)$ be an even vertex of $G$.  
\begin{enumerate}[(a)]
    \item If $v$ is a cut vertex of $G$, then there is a connected component $C$ of $G-v$ such that
$\min \text{ovl}(G-C)\geq 3$.
\item If $v$ is not a cut vertex of $G$,
then $G-v$ is not a block graph. Furthermore,
 there is an end-block $B$ of $G-v$ containing a unique cut vertex $s$ of $G-v$ 
such that $\min \text{ovl}(G-(B-s)) \geq 3$.
\end{enumerate}
\end{prop}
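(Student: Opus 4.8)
The plan is to argue by contradiction, exploiting the minimality of the counter-example $G$ in the main theorem's induction. Suppose neither conclusion holds. For part (a), assume $v$ is a cut vertex, and let $C_1,\dots,C_m$ ($m\ge 2$) be the connected components of $G-v$. If every graph $G_i := G-(\bigcup_{j\ne i} C_j) = G[V(C_i)\cup\{v\}]$ satisfied $\min\operatorname{ovl}(G_i)\le 2$, then each $G_i$ admits an odd $3$-edge-covering $\phi_i$ with $\operatorname{ovl}(\phi_i)\le 2$. The first move is to analyze the parity of $d_{G_i}(v)$ together with which color classes are odd at $v$ in $\phi_i$, and then permute colors within each $G_i$ (a permutation of $\{1,2,3\}$ preserves being an odd $3$-edge-covering) so that the color degrees at $v$ can be matched up across the lobes. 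Since $d_G(v)=\sum_i d_{G_i}(v)$ is even, one can aim to select the relabelings so that for each color $c$ the total number of edges at $v$ of color $c$ is even; after that, gluing the $\phi_i$ at $v$ yields an odd $3$-edge-covering of $G$. The key point is that the overlap is additive, $\operatorname{ovl}(\phi)=\sum_i \operatorname{ovl}(\phi_i)$, but this could be as large as $2m$; so the real content is showing that one may in fact take \emph{at most one} $G_i$ with $\operatorname{ovl}(\phi_i)=2$ and all others with $\operatorname{ovl}(\phi_i)=0$. Here I would invoke Lemma~\ref{cover_odd_component}: if some $C_i$ has odd order, then $\min\operatorname{ovl}(G)\le 2$ directly, contradiction; hence all $C_i$ have even order, each $G_i$ has odd order, and the parity bookkeeping at $v$ becomes rigid enough that the odd-coverings of the individual lobes can be chosen to be honest $3$-edge-colorings (overlap $0$) — this is exactly the situation handled inside the proofs of Lemmas~\ref{cut_vertex1} and~\ref{cover_odd_component} via odd spanning coforests. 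Combining them forces $\min\operatorname{ovl}(G)\le 2$, the desired contradiction; so some lobe $G'=G-C$ must have $\min\operatorname{ovl}(G')\ge 3$.

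For part (b), first I would establish that $G-v$ cannot be a block graph. If it were, then $v$ being an even \emph{internal} vertex of $G$ with $G-v$ having no cut vertices puts us exactly in the hypothesis of Corollary~\ref{connectivity_at_least_3}, which gives $\min\operatorname{ovl}(G)\le 2$ — contradiction. (One should also dispatch the degenerate small cases $v(G)\le 3$ separately, as in the proof of Theorem~\ref{main}, and reduce to $G=\operatorname{red}(G)$ with $G$ simple as assumed.) So $G-v$ has at least two end-blocks. Now fix any end-block $B$ of $G-v$ with cut vertex $s$, and set $G' := G-(B-s)$; this $G'$ is connected (since $s$ is not separated from the rest by removing $B-s$) and has odd order minus an even number... more carefully, one checks $v(G')$ has the right parity because $\min\operatorname{ovl}$ is only defined/nontrivial there. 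Suppose for contradiction that for \emph{every} end-block $B$ we had $\min\operatorname{ovl}(G-(B-s))\le 2$. The strategy is then to take an odd $3$-edge-covering $\psi$ of $G'$ with $\operatorname{ovl}(\psi)\le 2$ and extend it across $B-s$ using the structure we can force on $B$: by Claims~1,3,4,5 of Section~4 (every vertex of $\operatorname{Int}_{G-v}(B)$ is an odd vertex joined to $v$ by a bouquet of size $2$, $B-s$ is simple, $s$ is odd and not joined to $\operatorname{Int}_{G-v}(B)$ by a double edge), the subgraph on $B\cup(\text{edges from }B-s\text{ to }v)$ is highly controlled — in fact $B-s$ has minimum degree $\ge 2$ so contains a cycle, and the edges from $V(B-s)$ to $v$ form exactly the configuration of Lemma~\ref{two_ways}. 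Using the dichotomy of Lemma~\ref{two_ways} to absorb whatever parity $\psi$ leaves at $v$, and coloring a spanning odd coforest of $B-s$ (minus a cycle) with a third color, I would extend $\psi$ to an odd $3$-edge-covering of all of $G$ \emph{without increasing the overlap}, contradicting $\min\operatorname{ovl}(G)\ge 3$. Hence some end-block $B$ has $\min\operatorname{ovl}(G-(B-s))\ge 3$.

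The main obstacle, in both parts, is the overlap bookkeeping: a naive union of optimal coverings of the pieces has overlap equal to the \emph{sum}, which is useless. The crux is therefore to show that one may always choose the covering of all-but-one piece to be an actual odd $3$-edge-coloring (overlap $0$), isolating all the "badness" into the single designated subgraph $C$ or $B-s$. This is precisely where the odd-coforest technology of Lemma~\ref{coforest} and Lemma~\ref{forest_coforest}, the feedback-vertex lemmas (Lemmas~\ref{feedback_blow_up},~\ref{feedback}), and the special-case lemmas of Section~3 (Lemmas~\ref{cover_odd_component},~\ref{claim23}, Corollary~\ref{connectivity_at_least_3}) are deployed: they guarantee that the pieces \emph{other than} the troublesome one are odd $3$-edge-colorable outright, so that the glued covering inherits $\operatorname{ovl}\le 2$ from the troublesome piece alone. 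A secondary, more routine obstacle is checking that $G'$ still has odd order (so that $\min\operatorname{ovl}(G')$ is the relevant quantity and the inductive hypothesis applies) and remains connected and simple; this follows from the parity of the removed component and from $s$ being a cut vertex of $G-v$.
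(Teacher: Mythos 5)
Your skeleton shares some anchor points with the paper's proof (odd-order components are ruled out by Lemma~\ref{cover_odd_component}, the block-graph case of $G-v$ is ruled out by Corollary~\ref{connectivity_at_least_3}), but both crucial steps are missing or would fail. In (a), note first that the proposition concerns $G'=G-C$, i.e.\ $v$ together with \emph{all other} components, which coincides with your lobe $G_i=G[V(C_i)\cup\{v\}]$ only when $G-v$ has exactly two components. More importantly, your plan of gluing coverings of the lobes after permuting colors, with ``at most one lobe of overlap $2$ and all others of overlap $0$,'' is precisely the point that needs proof and is false in general: a lobe has odd order and need not be odd $3$-edge-colorable, and neither Lemma~\ref{cut_vertex1} nor Lemma~\ref{cover_odd_component} supplies overlap-$0$ coverings of individual lobes. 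The paper never produces coverings of the pieces; it first shows that if $d_{G[V(C)\cup\{v\}]}(v)$ were odd for every component $C$, then $G$ itself would be odd $3$-edge-colorable (contradicting $\min\text{ovl}(G)\geq 3$), so some component $C$ has \emph{even} attachment degree at $v$; for that $C$ it argues conditionally that any odd $3$-edge-covering of $G-C$ with overlap at most $2$ extends over the lobe with no new overlap, by placing the whole of $E_{G[V(C)\cup\{v\}]}(v)$ inside a coforest $V(C)$-join (Lemma~\ref{coforest}(a)) colored with a color already present at $v$. This parity-based selection of $C$ and the conditional extension are absent from your argument.

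In (b), the structure you invoke (Claims 1, 3, 4, 5 of Section 4 and Lemma~\ref{two_ways}) belongs to the multigraph \emph{coloring} proof: it presupposes double edges at $v$ ($N''_G(v)\neq\emptyset$), which cannot occur here since $G$ is simple, so those statements are not applicable. What is actually available is Lemma~\ref{claim23}: $v$ is adjacent to every vertex of $B-s$ and every such vertex has even degree in $G-v$; in addition one must \emph{prove} that $s$ is adjacent to every vertex of $B-s$ (the paper does this with a $T$-join construction that would otherwise yield an overlap-$2$ covering of $G$). That adjacency is what makes the extension of a covering $\psi$ of $G'=G-(B-s)$ possible: either some color appears at both $E_{G'}(v)$ and $E_{G'}(s)$ and all new edges get that color, or one colors the edges to $v$ with a color of $v$, the edges to $s$ with a color of $s$, and the interior of $B-s$ (an odd graph) with the third color, in each case preserving parities and adding no overlap. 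Your sketch contains neither the adjacency claim for $s$ nor this case analysis, and the Lemma~\ref{two_ways} dichotomy you propose in their place has no counterpart in the simple-graph setting.
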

\begin{proof} Note that $v(G)$ is odd, as otherwise $\min \text{ovl}(G)= 0$ by Theorem~\ref{Pyber_4}. 

\smallskip

\noindent {\bf (a)} By Lemma~\ref{cover_odd_component}, we may assume that all connected components of $G-v$ are of even order. There are two cases to consider.

\smallskip

\noindent \textbf{Case 1.} \textit{For every connected component $C$ of $G-v$, the degree $d_{G[V(C)\cup \{v\}]}(v)$ is odd.} 
In this case, since $d_G(v)$ is even, there are an even number of components of $G-v$, say, $C_1,\cdots, C_{2k}$. In the graph $G[V(C_1)\cup \{v\}]$, by Lemma~\ref{coforest}(b), we can find a $V(C_1)$-join $H_1$ which is a coforest and  satisfies $d_{\hat H_1}(v)=1$, where $\hat H_1$ is the edge complement of $H_1$ in $G[V(C_1)\cup \{v\}]$.  
Assign the color $3$ to $E(H_1)$. Since $\hat H_1$ is a forest, take an odd $2$-edge-coloring of $\hat H_1$ with color set $\{1,2\}$ under which $E_{\hat H_1}(v)$ is entirely  colored by $1$. 

For each of the remaining connected components $C_j$, $j=2,\cdots, 2k$, we follow a similar process:
take a $V(C_j)$-join $H_j$ of the graph $G[V(C_j)\cup \{v\}]$ that is a coforest and color its edge set with $1$ . 
Let $\hat H_j= G[V(C_j)\cup \{v\}] \backslash E(H_j)$ be the edge-complement of $H_j$ in $G[V(C_j)\cup \{v\}]$. 
Apply an odd $2$-edge-coloring 
to $\hat H_j$ with color set $\{2,3\}$ such that $E_{\hat H_j}(v)$ obtains the color $3$.  

This furnishes an 
odd $3$-edge-coloring of $G$. Consequently, $\min \text{ovl}(G)=0$, a contradiction.

\smallskip

\noindent \textbf{Case 2.} \textit{There is a connected component $C$ of $G-v$ such that $d_{G[V(C)\cup \{v\}]}(v)$ is even.}
Let $G'=G-C$ and note that $d_{G'}(v)$ is even. Arguing by contradiction,  suppose $G'$ has 
$\min\text{ovl}(G')\leq 2$. Let $\varphi$ be an odd $3$-edge-covering of $G'$ with $\text{ovl}(\varphi)\leq 2$, and say the color $3$ appears in $E_{G'}(v)$ under $\varphi$. 

Note that $v$ is an internal vertex for the induced subgraph $G[V(C)\cup \{v\}]$. 
By Lemma~\ref{coforest}(a), 
there is a $V(C)$-join $H$ which is a coforest and contains the whole of
$E_{G[V(C)\cup \{v\}]}(v)$. In other words, letting $\hat H$ be the edge-complement of 
$H$ in $G[V(C)\cup \{v\}]$, it holds that $d_{\hat H}(v)=0$. Assign the color $3$ to all of $E(H)$ 
and take an odd $2$-edge-coloring of $\hat H$ with color set $\{1,2\}$.
Clearly, this procedure extends 
$\varphi$ to an odd $3$-edge-covering of $G$, with overlapping at most $2$, which is a contradiction.
Therefore, we conclude that $\min\text{ovl}(G')\geq 3$.

\smallskip

\noindent {\bf (b)} If $G-v$ contains no cut vertices, then Corollary~\ref{connectivity_at_least_3} implies that $\min\text{ovl}(G)\leq 2$, which is absurd.
So $G-v$ is not a block graph. Let $B$ be an end-block of  $G-v$, and let $s$ be the unique cut vertex of $G-v$ that is contained in $B$. Note that every vertex $w\in B-s$ is an internal vertex of $G-v$. 

\smallskip

\noindent \textbf{Claim 1.} \textit{$\mathrm{Int}_{G-v}B\cap N_G(v)\neq\emptyset$.} 

Suppose otherwise. Then $s$ is not only a cut vertex of $G-v$ but also a cut vertex of $G$. By Lemma~\ref{cover_odd_component}, $v(B)$ is odd. Let $G'=G-(B-s)$. Thus $v(G')$ is odd as well. Take a $(V(B)-s)$-join $H$ of $B$ which is a coforest and has $|E_B(s)\backslash E_H(s)|\leq1$; here we use that $s$ is an internal vertex of $B$. Similarly, take a $(V(G')-s)$-join $H'$ of $G'$ which is a coforest. Let $\hat{H}$ and $\hat{H'}$ be the corresponding edge-complements (with regard to $B$ and $G'$, respectively). Note that $d_{\hat{H}}(s)=0$ or $1$ depending on whether $d_B(s)$ is even or odd. We distinguish between the two possibilities. 

Assume first that $d_{\hat{H}}(s)=0$. Take an odd $3$-edge-covering $\varphi$ of $G'$ with $\text{ovl}(\varphi)= 0$ (if no such covering exists, we are done). Say that the color $1$ appears in $E_{G'}(s)$ under $\varphi$. Extend to $E(G)$ by coloring $E(H)$ with $1$ and using an odd $2$-edge-coloring of the forest $\hat{H}$ with the color set $\{2,3\}$. 

Now assume that $d_{\hat{H}}(s)=1$. Let $E_{\hat{H}}(s)=\{e\}$. Color $E(H')\cup\{e\}$ with $1$. Let $u$ be the other endvertex of $e$ (besides $s$). The precoloring of $E_{\hat{H}}(u)$ (the edge $e$ is already colored by $1$) extends to an odd $2$-edge-coloring of $\hat{H}$ with color set $\{1,2\}$. Color $E(H)$ by $3$. Let $F$ be a forest of $G'$ which is minimal subjected to the conditions that $\hat{H'}\subseteq F$ and $d_F(s)>0$. In other words, either take $F=\hat{H'}$ (if $d_{\hat{H'}}(s)>0$) or $F=\hat{H'}+f$ for some edge $f\in E_{G'}(s)=E_{H'}(s)$. Take an odd $2$-edge-coloring of $F$ with color set $\{2,3\}$ under which the color $3$ appears on $E_{F}(s)$. This completes an odd $3$-edge-covering $\psi$ of $G$ with $\text{ovl}(\psi)\leq2$. 
\hfill$\diamond$

\smallskip

In view of Claim~1, Lemma~\ref{claim23} implies the following. 
\begin{enumerate}
\item$v$ is adjacent to  every vertex of $B-s$. 
\item every vertex $w$ in $B-s$ satisfies that $d_{G-v}(w)$ is even. 
\end{enumerate}

\smallskip

\noindent \textbf{Claim 2.}  \textit{The vertex $s$ is adjacent to every vertex of $B-s$.} 

Suppose otherwise and let $w\in \mathrm{Int}_{G-v}(B)$ be such that $s$ is not adjacent to $w$.
Since $w$ has an odd degree, the neighbors of $w$ in $B-s$ can be listed
as $\{q_1,\cdots,q_{2\ell}\}$. Recall that $\mathcal{E}$ (respectively, $\mathcal{O}$) denotes the set of all even (respectively, odd) vertices of $G$. 
By comparing parities, we see that $|\mathcal O|$ is even, implying that $|\mathcal E|$ is odd. 
Consider the even-sized vertex subset 
$T=\mathcal{E}-\{v\}$ in the graph $G-(\{v\}\cup(B-s))$. By Lemma~\ref{forest_coforest}, there is a $T$-join $H$, which is also 
a forest. Take an odd $2$-edge-coloring of $H$ with color set $\{1,2\}$.
Assign the color $1$ to $wq_j$ and the color $2$ to $vq_j$ for each $j=1,\ldots, 2k$.
For the special edge $vw$, assign both the colors 1 and 2.  
The remaining uncolored edges of $G$ form an odd graph; assign the color $3$ to all of them. 
This defines an odd $3$-edge-covering $\varphi$ of $G$ with 
$\text{ovl}(\varphi)=2$, a contradiction. \hfill$\diamond$

\smallskip

Now, let us suppose, for contradiction, that the graph $G'=G-(B-s)$ admits an odd $3$-edge-covering $\psi$ with $\text{ovl}(\psi)\leq 2$. 
Since the induced subgraph $B-s$ is an odd graph, it must have even order. Let $V(B-s) = \{v_1,\cdots,v_{2k}\}$. Note that $N_G(v)- (B-s)\neq \emptyset$ (by Lemma~\ref{cover_odd_component})
In the covering $\psi$, if a color appears in both edge sets $E_{G'}(v)$ and $E_{G'}(s)$, let us say color $1$, then we can extend the covering to an odd $3$-edge-covering $\psi'$ of $G$ with overlapping at most 2 by assigning color $1$ to all the uncolored edges.
Otherwise, without loss of generality, we can assume that the edge set $E_{G'}(s)$ contains color $1$ and does not contain color $3$, while  $E_{G'}(v)$ contains color $3$ and does not contain color $1$.
Then, we assign color $3$ to all
edges $vv_j$ and assign color $1$ to all edges $sv_j$, for $j=1,2,\cdots,2k$, 
and then assign color $2$ to 
all the edges in $B-s$.
One can verify that this procedure extends 
$\psi$ to an odd $3$-edge-covering $\psi'$ of $G$ with overlapping at most $2$. 
In either case, we have obtained a contradiction. 
\end{proof}

\begin{proof}[Proof of Theorem~\ref{covering_conjecture}] With all the preparatory work completed as above, we proceed to prove the theorem by contradiction.
 To do this, we select a simple connected graph $G$ as our example, satisfying $\min \text{ovl}(G)\geq 3$ and having the minimum number of vertices. 
If $G$ contains at most $3$ vertices, then 
it contains at most $3$ edges, and clearly $\min \text{ovl}(G)=0$, which is absurd. 
Furthermore, since 
$v(G)=n$ is odd (as otherwise $\min \text{ovl}(G)=0$ by Theorem~\ref{Pyber_4}), we have $v(G)\geq 5$. By comparing parities, there exists at least one even vertex $v$.  
If $v$ is a cut vertex, we can apply Proposition~\ref{main_step} (a) to obtain an example $G'$ with $\min \text{ovl}(G')\geq 3$ with $|G'|\leq |G|-2$, which contradicts our choice of $G$. 
If $G-v$ is connected, then we apply Proposition~\ref{main_step} (b) to obtain an example $G'$ with $\min \text{ovl}(G')\geq 3$ with $|G'|\leq |G|-2$, which again contradicts our choice of $G$.
\end{proof}

\section{Concluding remarks}
For a pair $(G,e)$ consisting of a loopless graph $G$ and an edge $e\in E(G)$, let $G+e'$ denote the spannning supergraph of $G$, obtained by doubling the edge $e$; in other words, we enlarge $G$ with a new edge $e'$ which is parallel to $e$. For example, by doubling any edge of $K_3$ it becomes the reduced Shannon triangle $S_{2,1,1}$. An equivalent formulation of Theorem~\ref{main} is the following. 

\smallskip

\textit{Every loopless connected graph that is not $S_{2,2,2}$ or $S_{2,2,1}$ is either odd $3$-edge-colorable or there exists an edge $e\in E(G)$ such that the graph $G+e'$ admits an odd $3$-edge-coloring in which $e$ and $e'$ receive the same color.}

\smallskip

Similarly, an equivalent formulation of Theorem~\ref{covering_conjecture} is the following.

\smallskip

\textit{Every simple connected graph is either odd $3$-edge-colorable, or there exists an edge $e\in E(G)$ such that the graph $G+e'$ admits an odd $3$-edge-coloring in which $e$ and $e'$ receive different colors.}

\smallskip

Consequently, every simple connected graph $G$ with $\chi_{\text{odd}}(G)=4$ contains edges $e$ and $f$ (not necessarily distinct) such that:
\begin{enumerate}
\item[$(1)$] the graph $G+e'$ admits an odd $3$-edge-coloring under which $e$ and $e'$ are colored the same;
\item[$(2)$] the graph $G+f'$ admits an odd $3$-edge-coloring under which $f$ and $f'$ are colored differently.
\end{enumerate}

Figure~\ref{fig:wheel} depicts the wheel $W_4$, a simple connected graph with odd chromatic index $4$. Any of its four spokes satisfies both $(1)$ and $(2)$. One wonders whether every simple connected graph $G$ with odd chromatic index $4$ contains an edge $e=f$ for which both $(1)$ and $(2)$ hold. We are tempted to end this article with the following.

\smallskip

\noindent\textbf{Problem.} \textit{Find a simple connected graph $G$ with $\chi_{\text{odd}}'(G)=4$ such that for no edge $e\in E(G)$ the graph $G+e'$ admits two odd $3$-edge-colorings, $\varphi_1$ and $\varphi_2$, such that $\varphi_1(e)=\varphi_1(e')$ and $\varphi_2(e)\neq\varphi_2(e')$.}

\begin{figure}[ht!]
	$$
		\includegraphics[scale=1.21]{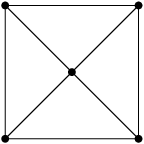}
	$$
	\caption{Any edge $e$ of the wheel $W_4$ satisfies $(1)$. Only a spoke edge $f$ satisfies $(2)$.}
	\label{fig:wheel}
\end{figure}

\bibliographystyle{plain}
\addcontentsline{toc}{chapter}{Bibliography}
\bibliography{odd_multi}
\end{document}